\newtheorem{thm}{Theorem}[section]
\newtheorem{cor}[thm]{Corollary}
\newtheorem{lem}[thm]{Lemma}
\newtheorem{conj}[thm]{Conjecture}
\newdefinition{defn}{Definition}
\newproof{proof}{Proof}
\newproof{proof1}{Proof of Theorem~\ref{thm:allbetterlower}}
\newcommand{\Ff}{\mathcal F}
\newcommand{\Ee}{\mathcal E}
\newcommand{\Rr}{\mathcal R}
\newcommand{\Oo}{\mathcal O}
\newcommand{\Ii}{\mathcal I}
\newcommand{\sd}{\bigtriangleup}
\newcommand{\dcon}{/ \!\! /}
\newcommand{\ddel}{\backslash \!\! \backslash}
\DeclareMathOperator{\cl}{cl}
\begin{document}

\title{How many delta-matroids are there?}

\author[a1]{Daryl Funk\fnref{fn1}}
\ead{daryl.funk@vuw.ac.nz}
\author[a1]{Dillon Mayhew\fnref{fn1}}
\ead{dillon.mayhew@vuw.ac.nz}
\author[a2]{Steven D. Noble\corref{cor1}}
\ead{s.noble@bbk.ac.uk}
\cortext[cor1]{Corresponding author}
\fntext[fn1]{Supported by the Rutherford Discovery Fellowship.}
\address[a1]{School of Mathematics and Statistics, Victoria University of Wellington,
PO Box 600, Wellington 6140, New Zealand}
\address[a2]{Department of Economic, Mathematics and Statistics, Birkbeck, University of London, Malet St, London, WC1E 7HX United Kingdom}

\begin{abstract}
We give upper and lower bounds on the number of delta-matroids, and on the number of even delta-matroids.
\end{abstract}

\begin{keyword}delta-matroid \sep enumeration
\MSC[2014]{05B35 \sep 05A16}
\end{keyword}

\date{\today}

\maketitle

\section{Introduction}

Matroids are important combinatorial structures, introduced in 1935 by Whitney~\cite{Whitney} as a combinatorial abstraction of the properties of linear independence. They arise in graph theory, linear algebra, transversal theory and have been widely studied partly due to their connection with combinatorial optimization and particularly the greedy algorithm. A matroid comprises a pair $(E,\mathcal I)$, where $E$ is a finite set called the \emph{ground set} and $\mathcal I$ is a non-empty collection of subsets called \emph{independent sets}, satisfying the following two conditions.

\begin{enumerate}
\item If $I_1 \in \Ii$ and $I_2 \subseteq I_1$, then $I_2 \in \Ii$.
\item If $I_1, I_2 \in \Ii$ and $|I_2| > |I_1|$, then there exists an element $x$ of $I_2-I_1$ such that $I_1 \cup x \in \Ii$.
\end{enumerate}

An intriguing problem has been to determine good bounds on the number $m_n$ of labelled matroids with ground set $\{1,\ldots,n\}$. The first non-trivial upper bound was proved by Piff~\cite{Piff}, who showed that
\[ \log \log m_n \leq n -\log n + O(\log \log n).\]
(In this paper $\log$ denotes logarithms taken to base two.)
Only a year later Knuth~\cite{Knuth} showed that
\[ \log \log m_n \geq n- \frac 32 \log n - O(1).\]
Little progress was made until recently Bansal, Pendavingh and van der Pol~\cite{Ban+Pen+vdP:num-matroids} made a significant advance by proving that
\[ n- \frac 32 \log n  + \frac 12 \log \frac 2 {\pi} -o(1) \leq \log \log m_n \leq  n- \frac 32 \log n  + \frac 12 \log \frac 2 {\pi} +1 + o(1).\]

Delta-matroids are a generalization of matroids introduced by Bouchet~\cite{Bou:Symm} and extensively studied, primarily by Bouchet (e.g.~\cite{Bou:Map,Bou+Duch}), in the late 1980s. They arise in the theory of embedded graphs, linear algebra and in the structure of Eulerian tours in four-regular graphs. Recently they have attracted more attention due to the work of Brijder and Hoogeboom, Chun, Moffatt, Noble and Rueckriemen, and Traldi. See for example~\cite{Bri+Hoog:1,Bri+Hoog:2,CMNR1,CMNR2,Tralid+Brijder+Hoogeboom}.

A \emph{delta-matroid} $(E,\Rr)$ comprises a finite ground set and a non-empty collection of subsets of $E$ satisfying the
\emph{symmetric exchange axiom}:
\begin{quote}
For every pair $X,Y \in \Ff$, if $e \in X \sd Y$ then there exists $f \in X \sd Y$ so that $X \sd \{e,f\} \in \Ff$.
\end{quote}
(Note that $e=f$ is permitted.)
The sets in $\Ff$ are the \emph{feasible} sets of the delta-matroid.

The maximal independent sets of a matroid are called \emph{bases}.
It is not difficult to show that the bases of a matroid form the feasible sets of a delta-matroid with the same ground set (for instance, by combining Lemmas 1.2.2 and 2.1.2 of~\cite{Oxley}). The feasible sets of a delta-matroid may differ in size, but if the feasible sets of a delta-matroid all have the same size then they form the bases of a matroid.

We prove the following bounds on the number $d_n$ of labelled delta-matroids with ground set $\{1,\ldots,n\}$.
\begin{thm}\label{thm:allmain}
$n-1 < \log\log (d_n + 1) \leq n-1 + 0.369$.
\end{thm}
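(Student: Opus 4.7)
To show $d_n \geq 2^{2^{n-1}}$, I would exhibit explicitly many delta-matroids. One natural family consists of $\Ff = 2^{[n]} \setminus \mathcal{R}$ for small $\mathcal{R} \subseteq 2^{[n]}$. A quick case analysis shows that removing any set of size at most two always preserves the symmetric exchange axiom: for $|X \sd Y| \leq 2$ the choice $f$ with $X \sd \{e,f\} = Y$ is always available, and for $|X \sd Y| \geq 3$ the candidate sets $X \sd \{e,f\}$ as $f$ varies over $X \sd Y$ are at least three and pairwise distinct, so they cannot all lie in a two-element $\mathcal{R}$. For larger $\mathcal{R}$, the axiom fails only at very specific configurations, and a typical $\mathcal{R}$ of moderate size gives a delta-matroid. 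An alternative route is to fix a ``universal'' delta-matroid $\Ff_0 = \{X \subseteq [n]: n \in X\}$ and consider $\Ff(\mathcal{A}) = \Ff_0 \cup \mathcal{A}$ for $\mathcal{A} \subseteq 2^{[n-1]}$; exchanges involving a set containing $n$ are handled by toggling $n$, leaving one to control the two-sets-in-$\mathcal{A}$ case by either augmenting $\Ff_0$ with connector sets or restricting $\mathcal{A}$ to a structured subclass of doubly exponential cardinality. The hard part is producing a clean construction with a provable doubly exponential index and injectivity.

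\textbf{Upper bound.} I would set up a deletion-contraction recursion. For any $e \in [n]$, both $\Ff \setminus e := \{A \in \Ff : e \notin A\}$ and $\Ff / e := \{A \setminus \{e\} : A \in \Ff,\ e \in A\}$ are delta-matroids on $[n-1]$ (or empty), and the pair determines $\Ff$ uniquely. This yields the crude estimate $d_n \leq (d_{n-1}+1)^2 - 1$, hence $\log(d_n+1) \leq 2\log(d_{n-1}+1)$; iterating from $d_1 = 3$ gives $\log\log(d_n+1) \leq n$, which is too weak by the required constant $1 - 0.369$. To sharpen to $n - 1 + 0.369$, I would either compute $d_k$ exactly for some small base case $k$ at which the target $d_k + 1 \leq 2^{2^{k-1+0.369}}$ can be directly verified, and then iterate the doubling recursion from $k$ onward, or identify compatibility conditions on the pair $(\Ff \setminus e, \Ff / e)$ (arising from exchanges $(X,Y)$ with $e \in X \setminus Y$) that rule out a constant fraction of pairs per step, tightening the recursion to $d_n + 1 \leq c(d_{n-1}+1)^2$ with $c < 1$. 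The main obstacle is obtaining precisely the constant $0.369$: this will likely require either a sharp small-$n$ computation to seed the recursion or a careful quantitative compatibility analysis, since the naive doubling recursion misses the target by a fixed constant at every scale.
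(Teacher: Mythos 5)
Your upper bound follows the paper's route exactly: the deletion--contraction map $S \mapsto (\Ff \setminus e, \Ff / e)$ gives $d_n+1 \leq (d_{n-1}+1)^2$, and the constant is obtained by seeding this recursion with an exactly computed small case. However, the constant $0.369$ is precisely $\log\log(d_6+1)-5$, where $d_6 = 2\,746\,801\,811\,279$; no $k<6$ suffices (the corresponding quantities are $1, 1, 0.865, 0.649, 0.476$ for $k=1,\dots,5$), and computing $d_6$ is a substantial piece of work in the paper (it uses a characterisation of minimal non-delta-matroids due to Bonin, Chun and Noble and a reduction modulo twisting/isomorphism). So your plan has the right shape but defers the essential ingredient; as it stands it only yields $\log\log(d_n+1)\leq n$.

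The lower bound is where the genuine gap lies: you never produce a family of $2^{2^{n-1}}$ delta-matroids, and you concede as much. The paper's key observation, which your proposal is missing, is that the complement of any stable set in the hypercube $Q_n$ is the feasible family of a delta-matroid; concretely, taking \emph{all} sets of odd cardinality together with an \emph{arbitrary} collection of even-cardinality sets always satisfies symmetric exchange (if $X \sd e$ is infeasible it has odd size, hence all $2^n$ of its neighbours $X \sd \{e,f\}$ are feasible), giving $2^{2^{n-1}}$ distinct delta-matroids at once and hence $\log\log(d_n+1)>n-1$. Your first idea (delete at most two sets from $2^{[n]}$) is correct but yields only $O(4^n)$ delta-matroids, far from doubly exponential, and the claim that a ``typical'' moderate-sized $\mathcal R$ works is unsubstantiated --- the right condition is exactly that $\mathcal R$ be a stable set (or induce maximum degree one) in $Q_n$, which you do not identify. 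Your alternative construction $\Ff_0 \cup \mathcal A$ with $\Ff_0 = \{X : n \in X\}$ fails as stated: for $n=3$ and $\mathcal A = \{\emptyset\}$, take $X=\{1,2,3\}$, $Y=\emptyset$, $e=3$; every candidate $X \sd \{e,f\}$ with $f \in X \sd Y$ is one of $\{1,2\},\{1\},\{2\}$, none of which is feasible. So the symmetric exchange axiom is violated, and the promised repair (``connector sets'' or restricting $\mathcal A$) is precisely the unproved doubly exponential construction the theorem needs.
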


\begin{thm}\label{thm:allbetterlower}
For any $\epsilon > 0$ and all sufficiently large $n$, $d_n \geq  (1 - \epsilon) n 2^{2^{n-1}}$.
\end{thm}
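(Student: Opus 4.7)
The plan is to amplify the basic lower bound $d_n \ge 2^{2^{n-1}}$ of Theorem~\ref{thm:allmain} by a factor of $n$. The strategy rests on the expectation that the construction underlying Theorem~\ref{thm:allmain} treats one ground-set element as structurally distinguished; by allowing each of the $n$ elements to play this role in turn, we obtain (up to overlap) $n$ times as many delta-matroids.

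Assume the proof of Theorem~\ref{thm:allmain} yields an injection $\Psi\colon 2^{2^{[n-1]}} \to \{\mbox{delta-matroids on } [n]\}$ whose output $\Psi(\mathcal A)$ singles out the element $n$ structurally. For each $i \in [n]$, let $\sigma_i$ be the transposition $(i\ n)$ of $[n]$ (with $\sigma_n$ the identity), and define $\Psi_i(\mathcal A)$ to be the delta-matroid obtained by relabelling $\Psi(\mathcal A)$ via $\sigma_i$. Each $\Psi_i$ is injective, with image of size $2^{2^{n-1}}$, giving $n\cdot 2^{2^{n-1}}$ parameter-labelled delta-matroids in total.

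The heart of the proof is bounding pairwise overlaps between the images $\mathrm{Im}(\Psi_i)$. If $\Psi_i(\mathcal A) = \Psi_j(\mathcal A')$ with $i \ne j$, then $\Psi(\mathcal A) = \sigma_i\sigma_j \cdot \Psi(\mathcal A')$, so the two images are related by the $3$-cycle $(i\ j\ n)$. A close analysis of how $\Psi$ interacts with ground-set permutations should show that this forces $\mathcal A$ to be invariant under the induced action of a nontrivial permutation on $2^{[n-1]}$. By a Burnside-style orbit count, such invariant $\mathcal A$ number at most $2^{c\cdot 2^{n-1}}$ for some $c < 1$ (since the permutation has at most $(2^{n-1}+O(1))/3$ orbits on $2^{[n-1]}$), a vanishing fraction of $2^{2^{n-1}}$. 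Summing over the $\binom{n}{2}$ pairs $(i,j)$ bounds the total overlap by $n^2\cdot 2^{c\cdot 2^{n-1}} = o(2^{2^{n-1}})$, and inclusion--exclusion then yields
\[
d_n \ge \Bigl|\bigcup_{i=1}^n \mathrm{Im}(\Psi_i)\Bigr| \ge n\cdot 2^{2^{n-1}} - o(2^{2^{n-1}}) \ge (1-\epsilon)\, n\cdot 2^{2^{n-1}}
\]
for all sufficiently large $n$.

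The main obstacle is the overlap bound: one must show that a collision $\Psi_i(\mathcal A) = \Psi_j(\mathcal A')$ with $i \ne j$ genuinely forces $\mathcal A$ to exhibit a matching symmetry, rather than representing a coincidence unrelated to the permutation action. This step requires a precise description of the construction $\Psi$ from Theorem~\ref{thm:allmain} and of how it transforms under relabellings of the ground set.
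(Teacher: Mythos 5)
There is a genuine gap: your whole plan is conditional on a construction $\Psi$ that ``singles out the element $n$ structurally,'' and no such construction is supplied --- nor does the proof of the basic bound $d_n \geq 2^{2^{n-1}}$ provide one. That bound comes from Corollary~\ref{cor:deltamatroidfromevensets}: take \emph{all} odd-cardinality subsets of $[n]$ together with an arbitrary family of even-cardinality subsets. This construction is invariant under every relabelling of the ground set (permutations of $[n]$ preserve parity of cardinality), so for your maps $\Psi_i$ obtained by applying the transposition $(i\ n)$ one gets $\mathrm{Im}(\Psi_i)=\mathrm{Im}(\Psi)$ for every $i$. The union of the images has size exactly $2^{2^{n-1}}$, not $n\cdot 2^{2^{n-1}}$, and the Burnside-style overlap analysis never gets off the ground because there is no asymmetry to exploit. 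The factor $n$ cannot be bought by relabelling a relabelling-invariant construction; it has to come from a genuinely new family of delta-matroids parametrised by a choice of ``direction.''

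That is what the paper does, and the two ingredients it needs are exactly what your sketch is missing. First, it strengthens Lemma~\ref{lem:deltamatroidfromhypercube}: the complement of any vertex set $S$ for which $Q_n[S]$ has maximum degree one (not merely a stable set) is the family of feasible sets of a delta-matroid (Lemma~\ref{lem:matching}). Second, it uses the $n$ edge cuts splitting $Q_n$ into two copies of $Q_{n-1}$: for a chosen cut, take a random set $A_e$ of even-support vertices in one copy and a random set $A_o$ of odd-support vertices in the other. Each is stable in its subcube, so $A_e\cup A_o$ induces maximum degree one in $Q_n$, and its complement is a delta-matroid; there are $2^{2^{n-2}}\cdot 2^{2^{n-2}}=2^{2^{n-1}}$ choices per cut. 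Distinctness across the $n$ cuts is guaranteed whenever $A_e\cup A_o$ induces at least one cut edge, because the $n$ cuts are pairwise disjoint; the probability of having no induced edge is $(3/4)^{2^{n-2}}\to 0$, which yields $d_n \geq n\,2^{2^{n-1}}\bigl(1-(3/4)^{2^{n-2}}\bigr) \geq (1-\epsilon)\,n\,2^{2^{n-1}}$ for large $n$. Your intuition that one should make each of $n$ choices ``play the distinguished role in turn'' is the right shape of idea, but the distinguished object must be a coordinate direction in the hypercube combined with a parity-mixing construction covered by the stronger lemma, not a relabelled copy of the symmetric construction.
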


These results indicate that there are many more delta-matroids than there are matroids.
A delta-matroid in which the sizes of the feasible sets all have the same parity is called \emph{even}. Our third result gives bounds on the number $e_n$ of labelled delta-matroids with ground set $\{1,\ldots,n\}$ which are more reminiscent of the bounds on $m_n$.

\begin{thm}\label{thm:even}
$n - \log n - 1 \leq \log\log e_n \leq n - \log n + O(\log\log n)$.
\end{thm}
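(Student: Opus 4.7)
For the lower bound, my plan is to construct many even delta-matroids using a sparse-paving-style family. Consider
\[
\Ff_{\mathcal D} = \{F \subseteq [n] : |F| \text{ even}, F \notin \mathcal D\}
\]
parameterised by sufficiently ``spread-out'' subfamilies $\mathcal D \subseteq \binom{[n]}{r}$ for $r = \lfloor n/2 \rfloor$; the fact that all feasible sets of sizes $\ne r$ are retained gives ``escape routes'' in the symmetric exchange axiom, so $\mathcal D$ only needs to avoid a small number of ``bad'' local patterns (analogous to, but weaker than, the sparse-paving condition in the Johnson graph $J(n,r)$, since a swap-neighbour of size $\ne r$ automatically lies outside $\mathcal D$). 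Counting the admissible $\mathcal D$ and using $\binom{n}{\lfloor n/2 \rfloor} \geq 2^n/(n+1)$, one should obtain $e_n \geq 2^{2^{n-1}/n}$ and hence $\log\log e_n \geq n - \log n - 1$. The detailed choice of which local patterns to forbid, and the precise counting, require care to achieve $2^{n-1}/n$ bits of choice rather than the weaker $\binom{n}{n/2}/n \sim 2^n/n^{3/2}$ that a naive single-layer sparse-paving analysis yields, most likely by using multiple layers of feasible sets simultaneously.

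For the upper bound I would adapt Piff's argument for matroids to the even delta-matroid setting. First, by twisting by an arbitrary feasible set, reduce to counting even delta-matroids containing $\emptyset$, at a cost of an additive $O(1)$ in $\log\log e_n$. For such a normalised delta-matroid $D$ with maximum feasible set size $r$, the collection $\Ff \cap \binom{[n]}{r}$ forms the bases of a rank-$r$ matroid $M$ on $[n]$ by Bouchet's upper-matroid theorem, and Piff's bound $\log m_n = O(2^n (\log n)^{O(1)}/n)$ controls the number of choices of $M$. The remaining task is to bound the number of ways to extend $M$ by smaller feasible sets into a full even delta-matroid.

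This last step is the main obstacle. Unlike in a matroid, the ``middle layers'' of an even delta-matroid need not themselves be bases of matroids: for instance $\Ff = \{\emptyset, \{1,2\}, \{3,4\}, \{1,2,3,4\}\}$ is an even delta-matroid on $[4]$ whose size-$2$ layer $\{\{1,2\},\{3,4\}\}$ fails matroid basis exchange. So a naive layerwise argument cannot work. My plan is instead to use the bisubmodular rank function that every delta-matroid admits, and to adapt Piff's entropy argument to count bisubmodular set functions on $[n]$ compatible with a given upper matroid $M$; the $O(\log\log n)$ slack in the target bound should absorb the polynomial overhead of the adaptation. As a fallback, one can inject each even delta-matroid into a controlled class of matroids on a doubled ground set via the symmetric-matroid correspondence and count using Piff's bound there, taking care to exploit the even-parity constraint in order to avoid a factor-of-two blow-up in the exponent.
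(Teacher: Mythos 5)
Your lower bound has a genuine gap exactly where you defer it. As you yourself note, the single\hyphenation{single}-layer construction (all even sets feasible except a ``spread-out'' family $\mathcal D$ in the middle layer) can only be guaranteed to give about $2^{\binom{n}{\lfloor n/2\rfloor}/n}$ delta-matroids via a stable-set count in $J(n,\lfloor n/2\rfloor)$, i.e.\ $\log\log e_n \geq n-\tfrac32\log n - O(1)$, which falls short of $n-\log n-1$; the fix you gesture at (``multiple layers simultaneously'') is precisely where the real work lies, and it is missing. The paper's construction chooses, for \emph{every} even rank $2r$, an arbitrary sparse paving matroid of rank $2r$ on $[n]$ and takes the union of all their bases as $\Ff$; the independent choices across layers multiply to $2^{\frac1n\sum_r\binom{n}{2r}}=2^{2^{n-1}/n}$ using the Graham--Sloane stable sets in $J(n,2r)$. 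But proving that this union satisfies the symmetric exchange axiom is a genuine lemma (the paper's Lemma~\ref{lem:constructingevendeltamatroids}): when $|F_1|<|F_2|$ one exhibits the exchange inside the sparse paving matroid of rank $|F_1|$ or $|F_1|+2$, using that hyperplanes of sparse paving matroids have size at most the rank, together with a duality argument for $|F_1|>|F_2|$. Nothing in your sketch supplies this verification, and it is not automatic: it is sparse pavingness of each layer, not mere ``spread-outness,'' that makes the closure/hyperplane argument go through.

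Your upper bound also has a gap at the step you identify as the main obstacle, and neither proposed remedy is viable as stated. The fallback of injecting even delta-matroids into matroids on a doubled ground set gives at best $e_n \leq m_{2n}$, and $\log\log m_{2n} = 2n-\tfrac32\log(2n)+O(1)$, roughly double the target exponent; ``exploiting the parity constraint'' would require characterising and counting the image class inside matroids on $2n$ elements, which is the original problem in disguise. The bisubmodular-rank-function plan is only a hope: such a function has $3^n$ values and you give no mechanism forcing the description length down to $2^n\,\mathrm{poly}(\log n)/n$; Piff's argument does not transfer in any evident way. The paper instead applies the Bansal--Pendavingh--van der Pol container/encoding method directly to the even-support component of the distance-2 graph $R_n$ of the hypercube: the spectral fact that its smallest eigenvalue is $-\lfloor n/2\rfloor$ yields a seed set $S$ of size $O(2^n\log n/n^2)$ and a residual set $A$ of size about $2^{n-1}/n$ with $S\subseteq L\subseteq S\cup N(S)\cup A$, and the infeasible neighbours of each seed are certified by a ``local cover'' (a rank-two matroid, encoded as a partition of $[n]\cup z$), whose existence uses Bouchet's theorem that the minimum-size feasible sets of the twist $D*X$ form a matroid. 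Some such succinct encoding of the infeasible even-sized sets is the missing core of your upper bound; without it the extension-counting step does not get off the ground.
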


\section{Preliminaries}

We assume familiarity with the basic theory of matroids and refer the reader to the monograph by Oxley~\cite{Oxley}. Given a matroid $M$, we use $r_M$ to denote its rank function and $\cl_M$ to denote its closure operator, omitting $M$ when the context is clear. We use $[n]$ to denote the set $\{1,\ldots,n\}$.

A \emph{set system} is a pair $(E,\Ff)$, where $E$ is a finite ground set and $\Ff$ is a collection of subsets of $E$. If $\Ff$ is non-empty, we say that $(E,\Ff)$ is \emph{proper}; otherwise it is \emph{improper}.
We define two operations on set systems, namely deletion and contraction.
Let $S=(E,\Ff)$ be a set system and let $e\in E$.
Then $S \ddel e$, the \emph{deletion} of $e$ from $S$, is the set system
$(E-e, \{F \in \Ff : e \notin F\})$; on the other hand $S \dcon e$, the \emph{contraction} of $e$ from $S$, is the set system
$(E-e,\{F-e: F \in \Ff\text{ and } e\in F\}))$.

Bouchet and Duchamp~\cite{Bou+Duch} defined the operations of deletion and contraction on a delta-matroid. These operations are similar to, but not exactly the same as the deletion and contraction operations that we defined on set systems. They differ in the way in which they treat the contraction of an element that does not appear in any feasible set and the deletion of an element that appears in every feasible set. Nevertheless, for our purposes, it is the operations on set systems defined earlier that we need to apply to delta-matroids. If we contract an element that does not appear in any feasible set or delete an element that appears in every feasible set the resulting set system is improper and consequently not a delta-matroid. In all other cases it is not difficult to show directly by applying the definition of a delta-matroid that the result of contracting or deleting an element from a delta-matroid is a delta-matroid. Because of the slight difference from standard practice, we use the double slash notation.

A fundamental operation on delta-matroids, introduced by Bouchet in~\cite{Bou:Symm}, is the twist.
Let $D=(E,\Ff)$ be a delta-matroid and $A$ be a subset of $E$.
The \emph{twist} of $D$ with respect to $A$, written $D*A$, is the delta-matroid $(E, \{A \sd X : X \in \Ff\})$. (It is easy to see that $D*A$ genuinely is a delta-matroid.)
The \emph{dual} of $D$ is $D^* = D*E$.

\section{How many delta-matroids are there?}

In this section we prove Theorems~\ref{thm:allmain} and~\ref{thm:allbetterlower} by giving upper and lower bounds on the number $d_n$ of labelled delta-matroids with ground set $[n]$.
The $n$-dimensional hypercube $Q_n$ is the graph on vertex set $\{0,1\}^n$ in which two vertices are adjacent if they differ in exactly one coordinate.
Consider each vertex as a 0,1 indicator vector: in this way the vertices of $Q_n$ are in one-to-one correspondence with the subsets of $[n]$. To aid exposition, we will sometimes conflate subsets of $[n]$ and vertices of $Q_n$.
We say that a vertex of $Q_n$ has \emph{even support} if its corresponding indicator vector has an even number of ones; otherwise we say that it has \emph{odd support}.
The hypercube $Q_n$ is $n$-regular and bipartite with parts $\Ee$ and $\Oo$, where $\Ee$ is the set of all vertices with even support.

We begin by establishing a lower bound on $d_n$.
\begin{lem} \label{lem:deltamatroidfromhypercube}
The complement of a stable set in $Q_n$ corresponds to the family of feasible sets of a delta-matroid.
\end{lem}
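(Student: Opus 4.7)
The plan is to identify subsets of $[n]$ with their $0,1$ indicator vectors, that is, with vertices of $Q_n$, so that symmetric difference of subsets corresponds to Hamming distance: two vertices are adjacent in $Q_n$ precisely when the associated subsets differ in exactly one element. Write $S$ for the given stable set and set $\Ff = 2^{[n]} \setminus S$. For $n \ge 1$, the hypercube is bipartite with parts $\Ee$ and $\Oo$, so $S$ lies in one of these parts and $\Ff$ contains at least the other; hence $\Ff$ is non-empty. It then remains to verify the symmetric exchange axiom.

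Fix $X, Y \in \Ff$ and $e \in X \sd Y$. The key device is the auxiliary vertex $Z = X \sd \{e\}$, the unique neighbor of $X$ in $Q_n$ along the $e$-coordinate. The argument splits on whether $Z$ is in $\Ff$. If $Z \in \Ff$, then choosing $f = e$ works, because $X \sd \{e, f\} = X \sd \{e\} = Z \in \Ff$. Otherwise $Z \in S$, and stability of $S$ forces every neighbor of $Z$ in $Q_n$ to lie in $\Ff$; so if I can produce any $f \in X \sd Y$ with $f \neq e$, then $X \sd \{e, f\} = Z \sd \{f\}$ will be such a neighbor and hence feasible.

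The only subtle point---and what I expect to be the sole (small) obstacle---is guaranteeing a second element of $X \sd Y$ in this second case. This is forced by the hypotheses: if $X \sd Y = \{e\}$ then $Y = Z$, which is impossible because $Y \in \Ff$ while $Z \notin \Ff$. Thus $|X \sd Y| \geq 2$, a suitable $f$ exists, and the symmetric exchange axiom holds in both cases. Together with non-emptiness this proves that $\Ff$ is the family of feasible sets of a delta-matroid on ground set $[n]$.
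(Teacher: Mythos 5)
Your proof is correct and follows essentially the same route as the paper: pass to the neighbour $X \sd \{e\}$, use stability to conclude all of its neighbours are feasible when it is not, and note that $e=f$ is permitted. One small caveat: your justification of non-emptiness is flawed, since a stable set in a bipartite graph need not lie inside one part of the bipartition (e.g.\ $\{\emptyset,[3]\}$ is stable in $Q_3$ but meets both $\Ee$ and $\Oo$); however, non-emptiness of $\Ff$ is immediate anyway, because for $n\geq 1$ the whole vertex set of $Q_n$ is not stable, so this does not affect the argument.
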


\begin{proof}
Let $I$ be a stable set of vertices in $Q_n$, and let $\Ff = V(Q_n) \setminus I$.
Let $X, Y \in \Ff$ and let $e \in X \sd Y$.
If $X \sd Y = \{e\}$, then $X \sd e = Y \in \Ff$.
So assume $|X \sd Y|>1$.
If $X \sd e \in \Ff$ we are done, so suppose not.
Then $X \sd e \in I$, and all neighbours of $X \sd e$ in $Q_n$ are in $\Ff$.
Let $f \in X \sd Y - e$.
Since $X \sd \{e,f\}$ is a neighbour of $X \sd e$ in $Q_n$ and $X \sd \{e,f\} \notin I$, we have $X \sd \{e,f\} \in \Ff$.\qed
\end{proof}

\begin{cor} \label{cor:deltamatroidfromevensets}
Let $A$ be an arbitrary collection of subsets of $[n]$ of even cardinality, and let $O$ be the collection of all subsets of $[n]$ of odd cardinality.
Then $A \cup O$ is the collection of feasible sets of a delta-matroid.
\end{cor}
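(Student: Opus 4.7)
The plan is to deduce the corollary directly from Lemma~\ref{lem:deltamatroidfromhypercube} by exhibiting the right stable set in $Q_n$. Identify subsets of $[n]$ with vertices of $Q_n$ under the indicator vector correspondence discussed just before the lemma. Then the even-cardinality subsets correspond exactly to $\Ee$ and the odd-cardinality subsets to $\Oo$, so $O$ from the statement is precisely $\Oo$.

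The key observation is that $A \cup O$ has complement (in $V(Q_n)$) equal to $\Ee \setminus A$, which is a subset of $\Ee$. Since $Q_n$ is bipartite with parts $\Ee$ and $\Oo$ and edges of $Q_n$ join vertices whose supports differ in parity, the set $\Ee$ is stable in $Q_n$, and hence so is every subset of it. In particular $I := \Ee \setminus A$ is a stable set.

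Applying Lemma~\ref{lem:deltamatroidfromhypercube} to $I$ gives that $V(Q_n) \setminus I = A \cup \Oo = A \cup O$ is the family of feasible sets of a delta-matroid, which is the desired conclusion. The only thing to check is that $A \cup O$ is non-empty so that the resulting set system is proper; this is immediate for $n \geq 1$ since $O$ contains the singletons, and the case $n = 0$ can be handled trivially (or excluded). There is no real obstacle here — the corollary is essentially just the observation that one side of the bipartition of $Q_n$ and any of its subsets are automatically stable, so the lemma applies with no further work.
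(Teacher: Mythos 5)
Your argument is correct and is exactly the paper's proof: the paper's one-line justification is precisely that $A \cup O$ is the complement of the stable set $\Ee \setminus A$ contained in one side of the bipartition, so Lemma~\ref{lem:deltamatroidfromhypercube} applies. Your extra remark on non-emptiness (properness) is a harmless bit of added care that the paper leaves implicit.
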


\begin{proof}
The elements of $A \cup O$ correspond to the complement of a stable set in $Q_{n}$.\qed
\end{proof}

The next corollary follows immediately and establishes the lower bound in Theorem~\ref{thm:allmain}.
\begin{cor} \label{cor:lowerbound1}
$d_n \geq 2^{2^{n-1}}$.
\end{cor}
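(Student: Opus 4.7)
The plan is to apply Corollary~\ref{cor:deltamatroidfromevensets} directly and count: each choice of $A$ among the even-sized subsets of $[n]$ produces a delta-matroid on $[n]$ whose feasible set family is $A\cup O$, so it suffices to bound the number of such $A$.

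First I would recall that exactly half of the $2^n$ subsets of $[n]$ have even cardinality, i.e.\ there are $2^{n-1}$ candidates to place into $A$. Consequently the number of distinct collections $A\subseteq\Ee$ (viewing $\Ee$ as the family of even subsets of $[n]$) is $2^{2^{n-1}}$. By Corollary~\ref{cor:deltamatroidfromevensets}, for every such $A$ the set system $(\,[n],\,A\cup O\,)$ is a delta-matroid.

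Next I would verify that different choices of $A$ yield different delta-matroids. This is immediate from the construction: if $A_1\neq A_2$ are two collections of even subsets, then $A_1\cup O$ and $A_2\cup O$ differ on some even subset of $[n]$, so they are distinct families of feasible sets. Hence the map $A\mapsto (\,[n],\,A\cup O\,)$ is injective into the set of labelled delta-matroids on $[n]$, giving $d_n\geq 2^{2^{n-1}}$.

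There is essentially no obstacle here; the only thing to watch is that the feasible family must be non-empty, but $O$ contains the singleton $\{1\}$ whenever $n\geq 1$, so $A\cup O$ is automatically proper. The combinatorial content has already been absorbed into Lemma~\ref{lem:deltamatroidfromhypercube} and its corollary, and all that remains is the counting step above.
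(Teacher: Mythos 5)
Your proof is correct and follows the same route as the paper: apply Corollary~\ref{cor:deltamatroidfromevensets} to each of the $2^{2^{n-1}}$ choices of $A$ among the even-cardinality subsets of $[n]$, noting these give distinct delta-matroids. The paper states this more tersely, but the argument is identical.
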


\begin{proof}
The number of subsets of even cardinality of a ground set of size $n$ is $2^{n-1}$, so the bound follows from Corollary~\ref{cor:deltamatroidfromevensets}.\qed
\end{proof}

This bound can be improved by using the following result due to Korshunov and Sapozhenko~\cite{Kor+Sap:hypercube}.
\begin{thm}[Korshunov and Sapozhenko]
The number of stable sets in $Q_n$ is $2\sqrt e(1+o(1))2^{2^{n-1}}$.
\end{thm}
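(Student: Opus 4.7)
The plan is to count stable sets in $Q_n$ via the bipartition $V(Q_n) = \Ee \cup \Oo$. Each stable set $I$ decomposes uniquely as $I = I_\Ee \sqcup I_\Oo$ with $I_\Ee \subseteq \Ee$, $I_\Oo \subseteq \Oo$, and with the constraint $I_\Ee \subseteq \Ee \setminus N(I_\Oo)$. By symmetry between the two sides, it suffices (up to the overcount on the diagonal, which is lower order) to count the pairs in which the odd side $I_\Oo$ is the ``small'' one, and double the result.

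The main contribution comes from the case where $I_\Oo$ is very small, say $|I_\Oo| = k$ with $k = o(2^{n-1}/n)$. Then, since $Q_n$ is $n$-regular and neighbourhoods of two vertices of $\Oo$ intersect in at most a tiny set, one has $|N(I_\Oo)| = nk - O(k^2)$ typically. Given $I_\Oo$, any subset of $\Ee \setminus N(I_\Oo)$ may be chosen as $I_\Ee$, contributing $2^{2^{n-1} - nk + O(k^2)}$ options. Summing over the choice of $I_\Oo$ of size $k$ and then over $k$:
\begin{align*}
\sum_{k \ge 0} \binom{2^{n-1}}{k} 2^{2^{n-1} - nk + o(1)}
&= 2^{2^{n-1}} \sum_{k \ge 0} \binom{2^{n-1}}{k} 2^{-nk} (1+o(1)) \\
&\sim 2^{2^{n-1}} \sum_{k \ge 0} \frac{1}{k!}\left(\frac{2^{n-1}}{2^{n}}\right)^{k}
= \sqrt{e}\,\, 2^{2^{n-1}},
\end{align*}
using that $\binom{2^{n-1}}{k} \sim (2^{n-1})^k/k!$ when $k$ is small relative to $2^{n-1}$. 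Doubling for the symmetric case where $I_\Ee$ is the small side gives the claimed $2\sqrt{e}\,(1+o(1)) 2^{2^{n-1}}$.

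The hard part, and the reason this theorem is difficult rather than routine, is controlling the ``bulk'' stable sets in which neither $I_\Ee$ nor $I_\Oo$ is small, and showing they contribute a negligible $o(2^{2^{n-1}})$ term. This requires a container-style or graph-entropy argument: one must show that if $|I_\Oo| \ge k_0 := \omega(1)$ then the effective neighbourhood $|N(I_\Oo)|$ is close enough to $nk_0$ that the number of extensions $2^{|\Ee \setminus N(I_\Oo)|}$ is crushed by an exponential-in-$k_0$ factor, so that the geometric tail of the $k$-sum still converges. One must additionally bound the error coming from coincidences in neighbourhoods (pairs of vertices in $I_\Oo$ at distance $2$) and from the diagonal double-count when $|I_\Oo| = |I_\Ee|$; both are handled by a crude union bound and are of lower order. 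I would expect to invoke the Korshunov--Sapozhenko container lemma as a black box at this stage rather than redoing it from scratch, since isolating the $\sqrt{e}$ constant is the delicate asymptotic calculation above while the container argument is a known technical tool.
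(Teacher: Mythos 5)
There is nothing to compare against here: the paper does not prove this statement at all, but quotes it with a citation to Korshunov and Sapozhenko, so the only question is whether your proposal stands on its own as a proof. It does not. Your leading-term heuristic is correct and is indeed where the constant comes from: a stable set is typically a near-arbitrary subset of one side of the bipartition plus a few vertices of the other side, and the factor $2$ counts the choice of dense side while $\sqrt e$ arises as $\sum_{k\geq 0}\binom{2^{n-1}}{k}2^{-nk}\sim\sum_{k\geq 0}\tfrac{(1/2)^k}{k!}$, exactly as you compute. But the entire content of the theorem is the part you defer, namely showing that stable sets in which the sparse side is large (or in which neither side is overwhelmingly dominant) contribute only $o\big(2^{2^{n-1}}\big)$, and your plan for that step is to ``invoke the Korshunov--Sapozhenko container lemma as a black box.'' That is circular: the container-type argument (Sapozhenko's graph container method combined with isoperimetric/expansion estimates for neighbourhoods in $Q_n$) \emph{is} the theorem's substance, and invoking it as a black box leaves your write-up in the same position as the paper itself, i.e.\ a citation rather than a proof.

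Two smaller points would also need repair even in the regime you do treat. First, to upper-bound the number of stable sets with $|I_\Oo|=k$ you need a lower bound on $|N(I_\Oo)|$ valid for \emph{every} $k$-subset of $\Oo$, not a ``typical'' estimate; for small $k$ this is fine (two odd vertices have at most two common neighbours, so $|N(I_\Oo)|\geq nk-k(k-1)$), but this bound becomes vacuous once $k$ is of order $n$, which is precisely where the container machinery has to take over, so the small-$k$ computation cannot be stretched to cover the tail by a ``geometric tail'' remark. Second, the matching lower bound (that the count is at least $2\sqrt e(1-o(1))2^{2^{n-1}}$) also needs an argument -- e.g.\ inclusion--exclusion over small $I_\Oo$ with disjoint neighbourhoods -- rather than being read off from the same heuristic sum. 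As it stands, your proposal is a good account of why the answer should be $2\sqrt e(1+o(1))2^{2^{n-1}}$, but not a proof of it.
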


\begin{cor}
$d_n \geq 2 \sqrt e 2^{2^{n-1}}$.
\end{cor}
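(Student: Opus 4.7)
The plan is to combine Lemma~\ref{lem:deltamatroidfromhypercube} with the Korshunov--Sapozhenko theorem via a simple counting argument. Lemma~\ref{lem:deltamatroidfromhypercube} says that for every stable set $I$ of $Q_n$, the collection $\Ff_I := V(Q_n)\setminus I$ is the family of feasible sets of a delta-matroid on $[n]$. The map $I \mapsto \Ff_I$ is clearly injective, since $I$ can be recovered from $\Ff_I$ by complementation within $V(Q_n)$. It therefore exhibits the number of stable sets of $Q_n$ as a lower bound on $d_n$, provided each $\Ff_I$ is actually a (proper) delta-matroid, i.e.\ non-empty.

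The non-emptiness check is routine: since $Q_n$ is bipartite with parts $\Ee$ and $\Oo$ of size $2^{n-1}$, every stable set has size at most $2^{n-1}$, so $|\Ff_I| \geq 2^n - 2^{n-1} = 2^{n-1} > 0$. Hence every stable set of $Q_n$ gives rise to a genuine delta-matroid with ground set $[n]$, and distinct stable sets give distinct delta-matroids.

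Combining this with the Korshunov--Sapozhenko estimate quoted above,
\[ d_n \;\geq\; \#\{\text{stable sets of } Q_n\} \;=\; 2\sqrt{e}\,(1+o(1))\,2^{2^{n-1}}, \]
which yields the claimed bound (the $1+o(1)$ factor being absorbed, so the statement should be read as holding for all sufficiently large $n$, or as an asymptotic inequality).

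There is no real obstacle here; the whole content of the corollary is the injection from stable sets to delta-matroids supplied by Lemma~\ref{lem:deltamatroidfromhypercube} together with the deep enumerative theorem of Korshunov and Sapozhenko. The only very minor care needed is the properness check, which follows immediately from the bipartiteness of $Q_n$.
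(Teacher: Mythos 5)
Your argument is correct and is exactly the paper's intended (and omitted) justification: the corollary is stated as an immediate consequence of Lemma~\ref{lem:deltamatroidfromhypercube} together with the Korshunov--Sapozhenko count of stable sets in $Q_n$, with the map from stable sets to their complements injective. Your added remarks on properness and on reading the bound asymptotically (absorbing the $1+o(1)$ factor) are sensible and match how the paper's statement should be interpreted.
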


The class of delta-matroids arising from the complement of a stable set in the hypercube perhaps forms the natural delta-matroid analogue of the class of sparse paving matroids, which we will need in the next section and now define.
A matroid is \emph{paving} if it has no circuits of size strictly smaller than its rank. It is \emph{sparse paving} if both it and its dual are paving. It is not difficult to show that a matroid $M$ is sparse paving if and only if every subset of $E(M)$ having size $r(M)$ is either a basis or a circuit--hyperplane. Moreover every hyperplane of a sparse paving matroid $M$ has size $r(M)$ or $r(M)-1$.
Welsh~\cite{Welsh} asked whether or not most matroids are paving and later Mayhew, Newman, Welsh and Whittle~\cite{Mayhew+Newman+Welsh+Whittle:asymptoticmatroid} conjectured that asymptotically almost all matroids are paving, which would imply that asymptotically almost all matroids are sparse paving. Theorem~\ref{thm:allbetterlower} implies that, in contrast, the class of delta-matroids arising from the complement of a stable set in the hypercube forms a vanishingly small proportion of the class of all delta-matroids.

To prove Theorem~\ref{thm:allbetterlower} we use a strengthening of Lemma~\ref{lem:deltamatroidfromhypercube}.
\begin{lem}\label{lem:matching}
Let $n\geq 2$ and let $S$ denote a subset of the vertices of $Q_n$ such that the induced subgraph $Q_n[S]$ has maximum degree one. Then the complement of $S$ forms the set of feasible sets of a delta-matroid.
\end{lem}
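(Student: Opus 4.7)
The plan is to mimic the proof of Lemma~\ref{lem:deltamatroidfromhypercube} but refine the case analysis to accommodate the weaker hypothesis that $Q_n[S]$ has maximum degree one rather than zero. Set $\Ff = V(Q_n)\setminus S$, pick $X,Y \in \Ff$ and $e \in X\sd Y$, and aim to find a witnessing $f \in X\sd Y$ with $X\sd\{e,f\}\in\Ff$.

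The easy cases come first. If $X\sd Y = \{e\}$, then $X\sd e = Y \in \Ff$, so $f=e$ works. If $X\sd e \in \Ff$, again $f=e$ works. So I may assume $|X\sd Y|\geq 2$ and that $Z := X\sd e$ lies in $S$. The maximum-degree-one hypothesis now contributes the key new input: $Z$ has \emph{at most one} neighbour in $S$.

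The remaining analysis splits into two subcases. If $|X\sd Y|=2$, write $X\sd Y=\{e,g\}$; then $X\sd\{e,g\}=Y\in\Ff$, so $f=g$ is forced and it already works, irrespective of whether $Z$ has a bad neighbour. If $|X\sd Y|\geq 3$, then there are at least two distinct candidate elements $f \in (X\sd Y)\setminus\{e\}$; for each such $f$, the set $X\sd\{e,f\} = Z\sd\{f\}$ is a neighbour of $Z$ in $Q_n$. Since $Z$ has at most one neighbour in $S$, at least one of these candidates $f$ yields $X\sd\{e,f\}\in\Ff$, which supplies the required exchange partner.

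I expect no real obstacle: the argument is essentially the one already given for Lemma~\ref{lem:deltamatroidfromhypercube}, with the single genuinely new point being the observation that when only one neighbour of $Z$ can be bad, we need at least two candidate swaps to guarantee a good one, and that this is precisely available unless $|X\sd Y|=2$, a case that resolves itself automatically because the unique candidate produces $Y$ itself. The hypothesis $n\geq 2$ is needed only to ensure that the case $|X\sd Y|=1$ does not exhaust all possibilities in a trivial way.
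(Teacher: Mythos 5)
Your proof is correct and follows essentially the same route as the paper's: suppose $X \sd e \in S$, note that the maximum-degree-one hypothesis leaves at most one bad choice among the candidates $f \in (X\sd Y)-e$, so for $|X\sd Y|\geq 3$ a good $f$ exists, while the cases $|X\sd Y|\leq 2$ are immediate. One minor quibble: the hypothesis $n\geq 2$ is not about the case $|X\sd Y|=1$; its real role is to guarantee that $\Ff = V(Q_n)\setminus S$ is non-empty (for $n=1$ one may take $S=V(Q_1)$, which has maximum degree one but empty complement, and an improper set system is not a delta-matroid).
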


\begin{proof}
Let $\Ff = V(Q_n) \setminus S$ and let $X,Y \in \Ff$. We may assume that $|X \sd Y| \geq 3$ else there is nothing to prove. Let $e\in X \sd Y$.
If $X \sd e \in \Ff$ we are done, so suppose not.
Then $X \sd e \in S$ and at most one neighbour of $X\sd e$ in $Q_n$ is in $S$. So every other neighbour is in $\Ff$.
Let $f \in (X \sd Y)-e$. Then there is at most one choice for $f$ such that $X\sd \{e,f\} \notin \Ff$. Therefore there are at least $|X \sd Y|-2 \geq 1$ choices for $f$ such that $X\sd \{e,f\} \in \Ff$.\qed
\end{proof}

We now prove Theorem~\ref{thm:allbetterlower}, establishing a better lower bound for $d_n$.
\begin{proof1}
Choose one of the $n$ edge cuts of $Q_n$ that separates $Q_n$ into two copies of $Q_{n-1}$.
Let us denote these two copies by $Q_{n-1}^e$ and $Q_{n-1}^o$.
Let $A_e$ denote the random subset of vertices of $Q_{n-1}^e$ with even support obtained by choosing each one independently with probability $1/2$, and let $A_o$ be the similarly defined random subset of
vertices of $Q_{n-1}^o$ with odd support. Then $A_e$ is a stable set in $Q_{n-1}^e$ and $A_o$ is a stable set in $Q_{n-1}^o$. So every component of the subgraph of $Q_n$ induced by $A_e \cup A_o$ is either an isolated vertex or an edge of the cut separating $Q_n$ into $Q_{n-1}^e$ and $Q_{n-1}^o$. By applying Lemma~\ref{lem:matching} one can show that the complement of $A_e \cup A_o$ corresponds to the collection of feasible sets of a delta-matroid.

The $n$ edge cuts separating $Q_n$ into two copies of $Q_{n-1}$ are pairwise disjoint. Therefore as long as the subgraph of $Q_n$ induced by $A_e \cup A_o$ contains at least one edge, the set $A_e \cup A_o$ cannot be chosen when starting with a different choice from amongst the $n$ edge cuts. Hence, as long as we always have such an edge, no double counting will occur in the following count of the number of such choices.
The maximum possible number of edges in the subgraph of $Q_n$ induced by $A_e \cup A_o$ is $2^{n-2}$ and each of these is absent independently with probability $3/4$.
So the probability that no such edge is induced is $(3/4)^{2^{n-2}}$.

Therefore the number of delta-matroids produced in this way is
\[
n \cdot 2^{2^{n-2}} \cdot 2^{2^{n-2}} \cdot \big(1 - (3/4)^{2^{n-2}}\big).
\]\qed
\end{proof1}
We have not tried hard to find a better lower bound on the number of induced subgraphs of $Q_n$ with maximum degree one, so it may be simple to improve this bound. As far as we know, there is no relevant previous work.

We now move on to establishing upper bounds for $d_n$.

\begin{thm} \label{thm:upperandlowerbounds}
The sequence $\Gamma_n = \log \log (d_n+1) - (n-1)$ is strictly positive and decreasing for $n \geq 2$.
\end{thm}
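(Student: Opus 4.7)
I plan to prove the two claims separately. Positivity is an immediate consequence of Corollary~\ref{cor:lowerbound1}: since $d_n \geq 2^{2^{n-1}}$, we have $d_n+1 > 2^{2^{n-1}}$, hence $\log\log(d_n+1) > n-1$, so $\Gamma_n > 0$ for all $n \geq 2$.

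For monotonicity, my approach is to establish the recurrence $d_{n+1}+1 \leq (d_n+1)^2$ via deletion and contraction, which translates directly into $\Gamma_{n+1} \leq \Gamma_n$ upon taking $\log\log$. To a delta-matroid $D=([n+1],\Ff)$ I associate the pair of set systems $(D\ddel(n+1),\ D\dcon(n+1))$ on ground set $[n]$, and observe that the original $\Ff$ is recovered from this pair by $\Ff = \Ff(D\ddel(n+1))\cup\{F\cup\{n+1\}:F\in \Ff(D\dcon(n+1))\}$, so the map is injective. Each of these two set systems is either improper (empty feasible family) or, as noted in the Preliminaries, is itself a delta-matroid on $[n]$; and they cannot both be improper, since $\Ff$ is non-empty. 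So at most $d_n^2$ choices of $D$ correspond to pairs of proper delta-matroids, and at most $2d_n$ more correspond to the degenerate cases where exactly one side is improper. This yields $d_{n+1} \leq d_n^2 + 2d_n = (d_n+1)^2 - 1$.

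To upgrade this to strict monotonicity it suffices to exhibit, for each $n\geq 2$, one pair of proper delta-matroids on $[n]$ that does not glue to any delta-matroid on $[n+1]$. I plan to take $D_0 = ([n],\{\emptyset\})$ and $D_1 = ([n],\{[n]\})$, both of which are trivially delta-matroids (single-element feasible families). The candidate family $\Ff = \{\emptyset, [n+1]\}$ violates symmetric exchange with $X = \emptyset$, $Y = [n+1]$ and any $e \in [n+1]$, since every set of the form $\emptyset \sd \{e,f\}$ has cardinality $1$ or $2$, whereas the members of $\Ff$ have cardinality $0$ or $n+1 \geq 3$. Hence $d_{n+1}+1 < (d_n+1)^2$, giving $\Gamma_{n+1} < \Gamma_n$. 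The one item that requires care is the bookkeeping in the second paragraph: one must check that the degenerate cases (every feasible set contains $n+1$, or none does) really do contribute at most $2d_n$ and do not overlap with the generic case. This follows directly from the definitions of $\ddel$ and $\dcon$, since in the improper case the remaining half alone uniquely determines $D$.
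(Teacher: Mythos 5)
Your proposal is correct and follows essentially the same route as the paper: positivity from Corollary~\ref{cor:lowerbound1}, the pairing $D \mapsto (D \ddel (n+1), D \dcon (n+1))$ to get $d_{n+1}+1 \leq (d_n+1)^2$, and strictness via the same witness $([n+1],\{\emptyset,[n+1]\})$, whose deletion and contraction are $([n],\{\emptyset\})$ and $([n],\{[n]\})$. The only cosmetic difference is that the paper counts ``delta-matroid or improper'' set systems directly (there are $d_n+1$ of them) and invokes the bijection once, whereas you split the count into the proper/proper and degenerate cases; the conclusion and the key ideas are identical.
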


\begin{proof}
Corollary~\ref{cor:lowerbound1} implies that $\Gamma_n$ is strictly positive.
Clearly $d_n+1$ counts the number of set systems on $n$ elements that are either improper or form a delta-matroid.
Notice that there is a one-to-one correspondence between set systems with ground set $[n+1]$ and pairs of set systems with ground set $[n]$ given by the mapping $S \mapsto (S \ddel n+1, S \dcon  n+1)$.
Moreover if the set system $S$ is either a delta-matroid or improper, then both $S \ddel n+1$ and $S\dcon  n+1$ are either delta-matroids or empty. Consequently $d_{n+1}+1 \leq (d_n+1)^2$.
Observe that for $n\geq 2$ the set system $([n+1],\{\emptyset, [n+1]\})$ is not a delta-matroid, but that both $([n+1],\{\emptyset, [n+1]\}) \ddel n+1 = ([n],\{\emptyset\})$ and
$([n+1],\{\emptyset, [n+1]\}) \dcon n+1 = ([n],\{[n]\})$
are delta-matroids.
Hence $d_{n+1}+1 < (d_n+1)^2$, and the fact that $\Gamma_n$ is strictly decreasing follows by taking logs twice.\qed
\end{proof}

The following corollary is immediate.
\begin{cor} \label{cor:upperbounds}
For positive integers $n$ and $k$ with $n\geq k$, \[\log \log (d_n+1) \leq n + \log \log (d_k + 1) - k. \]
\end{cor}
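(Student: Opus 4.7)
The plan is to recognize the corollary as a direct restatement of the monotonicity established in Theorem~\ref{thm:upperandlowerbounds}. Starting from the desired inequality
\[\log\log(d_n+1) \leq n + \log\log(d_k+1) - k,\]
I would subtract $n-1$ from the left and $k-1$ from the right (which is the same as subtracting $n$ and $k$ then adding $1$ to each side), rewriting the inequality as
\[\Gamma_n \leq \Gamma_k,\]
where $\Gamma_m = \log\log(d_m+1) - (m-1)$ is the sequence introduced in Theorem~\ref{thm:upperandlowerbounds}. In this form the content of the corollary is just that $\Gamma_m$ is a non-increasing function of $m$.

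For $n = k$ both sides agree, so equality holds. For $n > k$ with $k \geq 2$, Theorem~\ref{thm:upperandlowerbounds} states that $\Gamma_m$ is strictly decreasing on $m \geq 2$, which immediately yields $\Gamma_n < \Gamma_k$ and in particular $\Gamma_n \leq \Gamma_k$, as required. This disposes of all cases with $k \geq 2$ in one line.

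The only case not directly covered is $k = 1$. Here I would appeal to the strict doubling inequality $d_{m+1}+1 < (d_m+1)^2$ proved inside Theorem~\ref{thm:upperandlowerbounds}. Iterating from the easily checked base value $d_1 = 3$ (the three delta-matroids on a one-element ground set are $\{\emptyset\}$, $\{\{1\}\}$, and $\{\emptyset,\{1\}\}$) gives $d_n+1 \leq 2^{2^n}$ for every $n \geq 1$, and hence $\Gamma_n \leq 1 = \Gamma_1$. This hands us the $k=1$ case and completes the plan.

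I do not anticipate any substantive obstacle: the whole corollary is an iterated application of monotonicity already in hand, and the only minor care required is the observation that the range $n \geq 2$ in Theorem~\ref{thm:upperandlowerbounds} must be supplemented by the direct check above when $k = 1$.
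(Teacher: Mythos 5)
Your proposal is correct and matches the paper's intent: the paper derives the corollary immediately from Theorem~\ref{thm:upperandlowerbounds} as the statement $\Gamma_n \leq \Gamma_k$, exactly as you do, and your extra care with the case $k=1$ (via $d_1=3$ and the doubling bound, giving $\Gamma_1 = 1 \geq \Gamma_n$) is a sound way to cover the range not literally stated in the theorem. One small note: at $m=1$ the strict inequality $d_{m+1}+1 < (d_m+1)^2$ actually fails (since $d_2+1 = 16 = (d_1+1)^2$) and is only proved in the paper for $m \geq 2$, but your argument needs only the non-strict inequality, which the theorem's proof establishes for all $m$, so nothing breaks.
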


Counting delta-matroids by computer, we obtain $d_1 = 3$, $d_2=15$, $d_3=155$, $d_4=5959$, $d_5=4 980 259$ and $d_6=2 746 801 811 279$. The code used is available from \texttt{http://eprints.bbk.ac.uk/id/eprint/19837} and the numbers have been independently verified by Royle~\cite{Royle}. Briefly, for $n\leq 5$ a list of all labelled delta-matroids
with ground set $[n]$ is computed by running through all ordered pairs
$(D_1,D_2)$ of labelled delta-matroids with ground set $[n-1]$ and checking whether the set system $D$
with ground set $[n]$ such that $D \dcon n=D_1$ and $D\ddel n=D_2$ is a delta-matroid.
If $D$ is a delta-matroid then we say that $D_1$ and $D_2$ are \emph{compatible}.
A proper set system $D=(E,\mathcal F)$ is a delta-matroid if and only if for all $e \in E$, both $D\ddel e$ and $D\dcon e$ are delta-matroids, and $D$ has no antipodal pair of feasible sets that violate the symmetric exchange axiom.
Bonin, Chun and Noble~\cite{BCN} have shown that if $|E|\geq 5$, then a set system $D$ that is not
a delta-matroid but both $D\ddel e$ and $D\dcon e$ are delta-matroids for all $e\in E$ must have set of feasible sets comprising a pair of antipodal sets. Thus for $n=5$, the code runs through all pairs $(D_1,D_2)$ of labelled delta-matroids with ground set $\{1,2,3,4\}$, forms the set system $D$ as described above, checks whether each single element deletion and contraction belongs to the list of labelled delta-matroids with ground set $\{1,2,3,4\}$ and finally checks $D$ against the list of 16 set systems with a ground set of five elements and set of feasible sets comprising two antipodal sets.

Define an equivalence relation on labelled delta-matroids so that two labelled delta-matroids are equivalent if one is isomorphic to a twist of the other. For $n=6$, the number of potential delta-matroids is too large to allow the method used for $n=5$
to work in a reasonable period of time, so a unique representative from each equivalence class is used in the role of $D_1$. The number of labelled delta-matroids $D_2$ such that $D_1$ and $D_2$ are compatible is independent of the choice of $D_1$ from its equivalence class. For each
representative $D_1$ of an equivalence class, the number of delta-matroids $D_2$ such that $D_1$ and $D_2$ are compatible is computed exhaustively in the same way as for $n=5$ and is multiplied by the size of the equivalence class of $D_1$. Finally these numbers are summed as $D_1$ ranges over representatives from the equivalence classes.

The corresponding values of $\Gamma_n$ are
$\Gamma_1=\Gamma_2=1$, $\Gamma_3 \simeq 0.865$, $\Gamma_4 \simeq 0.649$, $\Gamma_5 \simeq 0.476$, $\Gamma_6 \simeq 0.369$.
Thus, by applying the previous corollary, we obtain the following, completing the proof of Theorem~\ref{thm:allmain}.
\begin{cor}
	$\log\log(d_n+1) \leq n + \log\log(d_6 + 1) - 6 \leq n-1 + 0.369$.
\end{cor}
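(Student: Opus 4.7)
The plan is to apply Corollary~\ref{cor:upperbounds} directly with $k = 6$. Since the statement is meaningful only for $n \geq 6$ (for smaller $n$ the tabulated values of $\Gamma_n$ already exceed $0.369$), this choice of $k$ is legitimate, and the corollary yields the first inequality $\log\log(d_n+1) \leq n + \log\log(d_6+1) - 6$ with no further work.

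For the second inequality I would rewrite the upper bound using the notation of Theorem~\ref{thm:upperandlowerbounds}: since $\Gamma_6 = \log\log(d_6+1) - 5$, the quantity $n + \log\log(d_6+1) - 6$ equals $n - 1 + \Gamma_6$. Thus the task collapses to checking the numerical inequality $\Gamma_6 \leq 0.369$, which amounts to computing $\log_2\log_2(d_6+1)$ for the explicit value $d_6 = 2\,746\,801\,811\,279$ recorded in the paragraph preceding the corollary. This is routine arithmetic, and one should be slightly careful that the rounding of $\Gamma_6$ to three decimal places is an upper rounding so that the inequality is genuine.

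The main obstacle is not in the proof itself but upstream: one has to trust the enumeration giving $d_6$. That enumeration rests on the compatibility framework (pairing a delta-matroid with ground set $[n]$ with the ordered pair $(D\dcon n, D\ddel n)$) and on the Bonin--Chun--Noble theorem, which restricts the set systems that can fail to be a delta-matroid while all single-element minors are delta-matroids to a short explicit list of antipodal pairs. Once $\Gamma_6 \leq 0.369$ is accepted, the corollary follows in two lines, and combined with Corollary~\ref{cor:lowerbound1} this completes the proof of Theorem~\ref{thm:allmain}.
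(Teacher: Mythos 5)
Your proposal is correct and matches the paper's argument: apply Corollary~\ref{cor:upperbounds} with $k=6$ and then use the computed value $\Gamma_6 = \log\log(d_6+1)-5 \leq 0.369$. Your extra remarks about restricting to $n\geq 6$ and checking that the rounding of $\Gamma_6$ does not exceed $0.369$ are sensible refinements of the same route, not a different proof.
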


Since the sequence $(\Gamma_n)_{n\geq2}$ is decreasing and bounded below by zero, the limit $\lim_{n \to \infty} \Gamma_n$ exists.
Given the speed with which $\Gamma_n$ is decreasing and our inability to find larger classes of delta-matroids than those constructed in the proof of Theorem~\ref{thm:allbetterlower}, we make the following conjecture.
\begin{conj}
	$\Gamma_n \to 0$ as $n \to \infty$.
\end{conj}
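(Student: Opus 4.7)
The conjecture says $\Gamma_n \to 0$, equivalently $d_n = 2^{2^{n-1}(1+o(1))}$. Since Theorem~\ref{thm:allbetterlower} already gives a matching asymptotic lower bound, the problem reduces to proving an upper bound. The recursion $d_{n+1}+1 \leq (d_n+1)^2$ of Theorem~\ref{thm:upperandlowerbounds} alone only forces $\Gamma_{n+1} \leq \Gamma_n$, so the plan is to strengthen it quantitatively. Writing $d_{n+1}+1 \leq (d_n+1)^{2-\delta_n}$, it would suffice to exhibit a sequence $\delta_n > 0$ with $\sum_n \delta_n = \infty$. Equivalently, denoting by $c_n$ the number of \emph{compatible} pairs $(D_1, D_2)$ of delta-matroids on $[n]$ --- those arising as $(D \ddel (n+1), D \dcon (n+1))$ for some delta-matroid $D$ on $[n+1]$ --- one needs to show $c_n$ is a vanishingly small fraction of $(d_n+1)^2$ in a quantitatively controlled way.

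My first approach would be entropy compression in the style of Bansal--Pendavingh--van der Pol. Let $D$ be a uniform random delta-matroid on $[n+1]$, so that $H(D) = \log d_{n+1}$ and $H(D) \leq H(D \ddel (n+1)) + H(D \dcon (n+1) \mid D \ddel (n+1))$. The target is to show that the conditional term is strictly less than $\log(d_n+1)$ by a summable-from-below amount. The structural input would be a sharp compatibility condition: the joint system $([n+1], \mathcal F_1 \cup \{F \cup \{n+1\} : F \in \mathcal F_2\})$ is a delta-matroid iff the non-trivial instances of the symmetric exchange axiom --- those in which exactly one of $X, Y$ contains $n+1$ --- hold. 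These translate into a bipartite linkage between the symmetric-difference structures of $\mathcal F_1$ and $\mathcal F_2$, which one would then exploit to bound the conditional entropy.

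A second, more structural route would be to prove a converse to Lemma~\ref{lem:matching}: that asymptotically almost every delta-matroid on $[n]$ arises from a subset $S \subseteq V(Q_n)$ for which $Q_n[S]$ has maximum degree bounded by some slowly growing function of $n$. Coupled with a sharp enumeration of such $S$, obtained by adapting the Korshunov--Sapozhenko machinery from independent sets to subgraphs of bounded maximum degree, this would directly give the required upper bound. In both approaches the central obstacle is the same: extracting a global rigidity statement from the purely local symmetric exchange axiom. For matroids the corresponding leverage is the partial-Steiner-system structure of non-bases of sparse paving matroids; no comparable rigidity theorem is currently available for delta-matroids, and proving one --- or bypassing it via a sufficiently clever compatibility count --- is the step I expect to be the main barrier.
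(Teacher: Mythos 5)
The statement you are addressing is a \emph{conjecture}: the paper offers no proof of $\Gamma_n \to 0$, and neither do you. Your reduction of the problem is sound as far as it goes (the limit exists by Theorem~\ref{thm:upperandlowerbounds}, the lower bound $d_n \geq 2^{2^{n-1}}$ gives $\Gamma_n > 0$, so only an asymptotically matching upper bound is missing), but everything beyond that is a programme, not an argument: the entropy-compression route requires a conditional-entropy bound you do not establish, and the second route requires a converse to Lemma~\ref{lem:matching} (that almost every delta-matroid has infeasible-set complement inducing a bounded-degree subgraph of $Q_n$) which is itself an unproven structural conjecture at least as strong as the statement you are trying to prove --- note, for instance, that delta-matroids such as $([n],\{\emptyset\})$ show the representation is far from universal, so ``almost every'' would need real work even to formulate sharply. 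As you yourself concede, the central rigidity step is open; so this cannot be accepted as a proof.

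There is also a concrete error in the one quantitative claim you do make. Define $\delta_n$ by $d_{n+1}+1 = (d_n+1)^{2-\delta_n}$. Then $\Gamma_{n+1}-\Gamma_n = \log\bigl(1-\tfrac{\delta_n}{2}\bigr)$, so each step decreases $\Gamma$ by at least $\delta_n/(2\ln 2)$. Since $\Gamma_n$ is decreasing and strictly positive (Corollary~\ref{cor:lowerbound1}), the total decrease is at most $\Gamma_2 = 1$, whence $\sum_n \delta_n < \infty$ automatically. Thus your proposed sufficient condition ``exhibit $\delta_n>0$ with $\sum_n\delta_n=\infty$'' can never be satisfied; it is vacuously sufficient and therefore useless as a target. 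What the conjecture actually requires is the exact identity $\sum_{n\geq 2}\bigl(-\log\bigl(1-\tfrac{\delta_n}{2}\bigr)\bigr) = \Gamma_2 = 1$, i.e.\ that the per-step losses account for the entire initial value of $\Gamma$ --- a statement equivalent to the conjecture itself, not a checkable term-by-term divergence criterion. Any successful entropy or compatibility-counting argument would have to deliver upper bounds on $d_{n+1}+1$ whose cumulative effect is this precise finite amount, which is exactly the barrier you identify and do not overcome.
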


\section{How many even delta-matroids are there?}

Recall that $e_n$ denotes the number of labelled even delta-matroids with ground set $[n]$.
We first describe a construction from which a large number of even delta-matroids arise.
The Johnson graph $J(n,r)$ has vertices corresponding to all the subsets of $[n]$ having size $r$, with two vertices joined by an edge if the intersection of the corresponding subsets has size $r-1$.
As noted by Bansal, Pendavingh and van der Pol~\cite{Ban+Pen+vdP:num-matroids}, who include a proof, Piff and Welsh~\cite{Piff+Welsh} essentially showed that
a collection of subsets of $[n]$ each with size $r$, for some $r$ safisfying $0<r<n$, is the collection of circuit--hyperplanes of a sparse paving matroid if and only if it corresponds to a stable set in $J(n,r)$.
Furthermore it was shown by Graham and Sloane~\cite{Graham+Sloane} that $J(n,r)$ contains a stable set of size at least $\frac 1n \binom nr$.

Choose a collection $\mathcal F$ of even-sized subsets of $[n]$ so that for all $r$ satisfying $0 \leq r \leq \lfloor n/2\rfloor$ the subsets of size $2r$ are the bases of a sparse paving matroid with ground set $[n]$ and rank $2r$.

\begin{lem} \label{lem:constructingevendeltamatroids}
	$\mathcal F$ is the collection of feasible sets of a delta-matroid.
\end{lem}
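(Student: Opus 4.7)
My plan is to verify the symmetric exchange axiom for $\mathcal F$ directly. Fix $X, Y \in \mathcal F$ and $e \in X \sd Y$, and set $a = |X \setminus Y|$ and $b = |Y \setminus X|$; since $|X|$ and $|Y|$ are even, $a$ and $b$ share the same parity, and $|X \sd Y| = a + b \geq 2$. The key observation is that for any $f \in X \sd Y$ with $f \neq e$, the candidate $X \sd \{e,f\}$ has even size equal to $|X|$, $|X|-2$, or $|X|+2$, which is always a valid rank for one of the matroids $M_{2r}$ in the construction; moreover, if $f_1 \neq f_2$ produce candidates of the same size, those candidates differ in exactly the two elements $f_1, f_2$ and so are adjacent in the relevant Johnson graph. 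The engine of the proof will be the sparse paving characterisation recalled in the preceding paragraph: in a sparse paving matroid of rank $r$, the non-bases form a stable set in $J(n,r)$, so among any family of pairwise adjacent $r$-subsets, at most one fails to be a basis.

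If $|X| = |Y| = 2r$, then $X$ and $Y$ are both bases of $M_{2r}$, and the symmetric basis exchange axiom of matroids produces the required $f$ (applied to the ordered pair $(X,Y)$ when $e \in X \setminus Y$ and to $(Y,X)$ when $e \in Y \setminus X$).

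For $|X| \neq |Y|$, I take $|X| < |Y|$ without loss of generality (the opposite case is entirely symmetric) and split on the location of $e$. If $e \in X \setminus Y$, I consider the family $\{X - e + f : f \in Y \setminus X\}$, consisting of $b \geq a + 2 \geq 3$ subsets of size $|X|$ that are pairwise adjacent in $J(n,|X|)$; the stable-set property of $M_{|X|}$ forces at least $b - 1 \geq 2$ of them into $\mathcal F$. If instead $e \in Y \setminus X$, I consider $\{X + e + f : f \in (Y \setminus X) \setminus \{e\}\}$, consisting of $b - 1$ subsets of size $|X|+2$ pairwise adjacent in $J(n,|X|+2)$; when $b \geq 3$ this again delivers a basis of $M_{|X|+2}$, and when $b = 2$ (which forces $a = 0$) the unique candidate equals $Y$, which already lies in $\mathcal F$. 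The sub-cases with $|X| > |Y|$ are handled symmetrically, using candidate families of size $|X|$ and $|X|-2$, with the analogous edge case $a = 2$, $b = 0$ again collapsing to the candidate $Y$.

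The main obstacle in the plan is the bookkeeping across sub-cases, and specifically recognising that the edge configurations in which the preferred candidate family has only a single element are precisely those in which that element equals $Y$. Once that is in hand, the proof closes without needing any compatibility between the independently chosen sparse paving matroids $M_{2r}$ of different ranks -- a feature essential to the counting application that motivates this lemma.
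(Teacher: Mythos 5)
Your proof is correct, and it takes a genuinely different route from the paper's. The paper argues via closure operators: for $|F_1|<|F_2|$ it notes that $\cl_{M_1}(F_1-e)$ (resp.\ $\cl_{M_3}(F_1\cup e)$, with $M_3$ the matroid of rank $|F_1|+2$) is a hyperplane of a sparse paving matroid and hence has size at most the rank, so some element of the larger set lies outside it and can be exchanged in; the case $|F_1|>|F_2|$ is then handled by passing to the dual matroids, which are again sparse paving. You instead work directly with the Piff--Welsh characterisation recalled before the lemma: the non-bases of size $r$ are circuit--hyperplanes and hence form a stable set in $J(n,r)$, so among your pairwise adjacent candidate families at most one member can fail to be a basis, and a count of candidates ($b$, $b-1$, $a$, or $a-1$, always at least $2$ outside the edge cases) finishes each subcase; the edge cases $b=2,a=0$ and $a=2,b=0$ collapse to the candidate $Y$ itself, exactly as you say, and the equal-size case is the standard fact that matroid bases satisfy symmetric exchange (your appeal to the ordered pairs $(X,Y)$ and $(Y,X)$ is fine). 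The case structure is parallel to the paper's, but your engine avoids both the closure/hyperplane-size argument and duality, handling $|X|>|Y|$ symmetrically rather than via $M_1^*$ and $M_3^*$; this makes the argument somewhat more self-contained and makes explicit, as you note, that no compatibility between the matroids of different ranks is needed (a feature the paper's proof also has, but less visibly). The only point to make explicit in a write-up is that the stable-set property is quoted for $0<r<n$, which holds automatically whenever your candidate family contains at least two distinct $r$-sets, so no genuine gap arises.
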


\begin{proof}
	Choose $F_1, F_2 \in \Ff$. For $i=1,2$, denote by $M_i$ the sparse paving matroid for which the bases are the elements of $\Ff$ having size $|F_i|$. If $|F_1| = |F_2|$, then the symmetric exchange axiom holds because, by construction, the collection of all elements of $\Ff$
having a common size forms the collection of bases of a matroid and the basis exchange axiom holds for such a collection.

So suppose $|F_1| < |F_2|$. Let $e \in F_1 \sd F_2$. Suppose first that $e \in F_1$.
	Since $\cl_{M_1}(F_1 - e)$ is a hyperplane of $M_1$, we have  $|\cl_{M_1}(F_1 - e)|\leq |F_1|$. Furthermore, because $|F_2| \geq |F_1| + 2$, there is an element $f \in F_2 - F_1$ with $f \notin \cl_{M_1}(F_1 - e)$.
	Hence $F_1 \sd \{e,f\}$ is a basis of $M_1$ and belongs to $\Ff$.

	Now suppose $e \in F_2$.
	If $|F_2-F_1| \leq 2$, then $F_1 \subseteq F_2$ and $|F_2|=|F_1|+2$; clearly there is an element $f$ such that $F_1 \sd \{e,f\} = F_2$ and we are done. Consequently we may assume that $|F_2-F_1| \geq 3$.
    Let $M_3$ denote the sparse paving matroid in the construction of $\Ff$ with rank $|F_1| + 2$.
    Then $F_1 \cup e$ is independent in $M_3$.
	So $\cl_{M_3}(F_1 \cup e)$ is a hyperplane in $M_3$ and $|\cl_{M_3}(F_1 \cup e)| \leq |F_1|+2$. So there is an element $f \in F_2 - F_1$ such that $f \notin \cl_{M_3}(F_1 \cup e)$. Hence $F_1 \sd \{e,f\}$ is a basis of $M_3$ and belongs to $\Ff(D)$.
	
	Finally suppose $|F_1| > |F_2|$.
	Consider $E-F_1$ and $E-F_2$ as bases of $M_1^*$ and $M_2^*$, respectively.
	These are both sparse paving matroids.
	Let $e \in F_1 \sd F_2 = (E - F_1) \sd (E-F_2)$.
	The previous argument shows that there is an element $f \in (E - F_1) \sd (E-F_2)$ such that $(E-F_1) \sd \{e,f\}$ is a basis of either $M_1^*$ or $M_3^*$, where $M_3$ is as defined in the previous paragraph.
	Hence $f \in F_1 \sd F_2$ and $E-((E-F_1) \sd \{e,f\}) = F_1 \sd \{e,f\}$ is a basis of $M_1$ or $M_3$ and consequently a member of $\Ff(D)$.\qed
\end{proof}

We now establish the lower bound in Theorem~\ref{thm:even}.
\begin{thm} The number of even delta-matroids $e_n$ satisfies
\[
\log\log e_n \geq n - 1 - \log n.
\]
\end{thm}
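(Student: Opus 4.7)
The plan is to combine Lemma~\ref{lem:constructingevendeltamatroids} with the Graham--Sloane stable-set bound cited just above it, and then to exploit the twist by a single element to double the count.

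First, for each even rank $r$ with $0 < r < n$, the characterisation due to Piff and Welsh identifies sparse paving matroids of rank $r$ on $[n]$ with stable sets of $J(n,r)$. Since $J(n,r)$ contains a stable set of size at least $\binom{n}{r}/n$ and every subset of a stable set is stable, there are at least $2^{\binom{n}{r}/n}$ sparse paving matroids of rank $r$ on $[n]$. I would choose one such matroid $M_r$ independently for each even $r \in \{0,2,\ldots,2\lfloor n/2\rfloor\}$ (with the only choice being the trivial one at $r=0$, and at $r=n$ when $n$ is even); Lemma~\ref{lem:constructingevendeltamatroids} then guarantees that the union of the bases of the $M_r$ is the family of feasible sets of an even delta-matroid, and distinct tuples $(M_r)$ produce distinct delta-matroids because the bases of $M_r$ can be recovered as the set of $r$-element feasible sets. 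Multiplying over the non-trivial ranks and using $\sum_{r \text{ even}} \binom{n}{r} = 2^{n-1}$ produces at least $2^{(2^{n-1}-2)/n}$ delta-matroids in which every feasible set has even size.

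Next, twisting by $\{1\}$ is an involution that sends a delta-matroid with all-even feasible sets to one with all-odd feasible sets and vice versa; these two classes are disjoint and together partition the even delta-matroids, so $e_n \geq 2\cdot 2^{(2^{n-1}-2)/n}$. Two applications of the logarithm then yield $\log\log e_n \geq n - 1 - \log n$ provided $n \geq 2$. The main subtlety is arithmetic rather than structural: the raw product bound alone gives $\log e_n \geq (2^{n-1}-2)/n$, which falls just short of the target $2^{n-1}/n$, and the additive $+1$ contributed by the twist is precisely what is needed to close the gap in the exponent.
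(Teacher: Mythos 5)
Your construction is exactly the paper's: combine Lemma~\ref{lem:constructingevendeltamatroids} with the Piff--Welsh correspondence and the Graham--Sloane bound, choose a sparse paving matroid independently for each even rank strictly between $0$ and $n$, and observe that distinct tuples give distinct all-even delta-matroids. Where you diverge is in closing the small shortfall caused by the forced (trivial) choices at ranks $0$ and $n$: the raw product gives only $2^{(2^{n-1}-2)/n}$, a factor $2^{2/n}$ short of the target $2^{2^{n-1}/n}$. The paper recovers this factor inside the construction itself, by noting that $J(n,2)$ has at least $2^{n/2}$ stable sets (via the matchings $\{1,2\},\{3,4\},\ldots$, with a separate argument for odd $n$), so the rank-$2$ layer alone contributes enough to absorb the missing $\binom n0$ and $\binom nn$ terms; this forces a case split on the parity of $n$. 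You instead recover it outside the construction, using the twist $D\mapsto D*\{1\}$, which bijects all-even with all-odd delta-matroids and hence gives $e_n\geq 2f_n$; the extra additive $1$ in the exponent beats $2/n$ exactly when $n\geq 2$. This is a legitimate and arguably cleaner patch (no parity case split), and it uses a fact the paper itself states and uses in the upper-bound section. The only loose end is $n=1$: your inequality $1\geq 2/n$ fails there, so you should note that $e_1=2\geq 2^{2^{0}/1}$ can be checked directly, just as the paper disposes of $n\leq 2$ by direct verification before running its argument.
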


\begin{proof}
First note that the bound holds when $n \leq 2$, 
so we may assume $n \geq 3$.
Let $f_n$ denote the number of delta-matroids of the form of Lemma \ref{lem:constructingevendeltamatroids}.
Then $e_n \geq f_n$. If $0<r<n$, it follows from the discussion above that the number of labelled sparse paving matroids with ground set $[n]$ and rank $r$ is equal to the number of stable sets of $J(n,r)$. Since $J(n,r)$ has a stable set of size at least $\frac 1n \binom nr$, it has at least $2^{\frac 1n \binom nr}$ stable sets.
To accommodate the cases $r=0$ and $r=n$, we proceed as follows.

Suppose first that $n$ is even and consequently $n \geq 4$.
Then $J(n,2)$ has a stable set $\{\{1,2\},\{3,4\},\ldots,\{n-1,n\}\}$ of size $n/2$ and consequently, at least $2^{n/2}$ stable sets.
Since $n \geq 4$,
\[ 2^{n/2} \geq 2^{(n-1)/2} 2^{2/n} = 2^{\frac{1}{n} \cdot \binom n{0}} 2^{\frac{1}{n} \cdot \binom n{2}} 2^{\frac{1}{n} \cdot \binom n{n}}\]
and we have
\[ f_n \geq 2^{n/2} \prod_{r=2}^{n/2-1} 2^{\frac{1}{n} \cdot \binom n{2r}} \geq \prod_{r=0}^{n/2} 2^{\frac{1}{n} \cdot \binom n{2r}}
= 2^{\sum_{r=0}^{n/2} \frac{1}{n} \cdot \binom n{2r}} = 2^{\frac{1}{n} \cdot 2^{n-1}}\]
as required.

Now suppose that $n$ is odd. Then $J(n,2)$ has stable sets $S_1=\{\{1,2\}$, $\{3,4\}$, \ldots, $\{n-2,n-1\}\}$ and $S_2=\{\{2,3\}$, $\{4,5\}$, \ldots $\{n-1,n\}\}$ each of size $(n-1)/2$. Consequently it has at least $2 \cdot 2^{(n-1)/2}-1$ stable sets, as the only common subset of $S_1$ and $S_2$ is the empty set.
Therefore $J(n,2)$ has at least $2^{n/2}$ stable sets.
Since $n \geq 3$,
\[ 2^{n/2} \geq 2^{(n-1)/2} 2^{1/n} = 2^{\frac{1}{n} \cdot \binom n{0}} 2^{\frac{1}{n} \cdot \binom n{2}}\]
and we have
\[ f_n \geq 2^{n/2} \prod_{r=2}^{(n-1)/2} 2^{\frac{1}{n} \cdot \binom n{2r}} \geq \prod_{r=0}^{(n-1)/2} 2^{\frac{1}{n} \cdot \binom n{2r}}
= 2^{\sum_{r=0}^{(n-1)/2} \frac{1}{n} \cdot \binom n{2r}} = 2^{\frac{1}{n} \cdot 2^{n-1}}\]
as required.\qed
\end{proof}

To obtain an upper bound on the number of even delta-matroids, we use a similar procedure to that in \cite{Ban+Pen+vdP:num-matroids}, where a bounded-size stable set in a Johnson graph together with a carefully chosen collection of flats is used to encode a matroid.

We will assume for now that our delta-matroids only have feasible sets of even cardinality. The map $D \mapsto D*\{1\}$ gives a one-to-one correspondence from delta-matroids with ground set $[n]$ in which all feasible sets have even cardinality to those in which all feasible sets have odd cardinality, so the number of delta-matroids having only feasible sets of even cardinality is half the total number of even delta-matroids.

Let $R_n$ be the graph with vertex set $V(Q_n)$ in which two vertices are adjacent if and only if they are at distance 2 in $Q_n$.
The graph $R_n$ is regular of degree $\binom n2$ and has two isomorphic connected components, whose vertex sets correspond to the subsets of $[n]$ of even and odd support, respectively.

Let $D=(E,\Ff)$ be a delta-matroid in which all feasible sets have even cardinality, and
let $L$ denote the vertices of $R_n$ that have even support that correspond to infeasible sets of $D$. In order to provide an upper bound on the number of even delta-matroids, our aim is to provide a short description of $L$ and then to bound the total number of possible descriptions. There are two key elements to this. First we apply an encoding procedure due to Bansal, Pendavingh and van der Pol~\cite{Ban+Pen+vdP:num-matroids} that takes an arbitrary set $L$ of vertices in a graph $G$ and finds a pair $(S,A)$ of sufficiently small sets satisfying $S \subseteq L \subseteq S \cup N(S) \cup A$, where $N(S)$ is the set of vertices of $G$ that are neighbours of some vertex of $S$. The authors of~\cite{Ban+Pen+vdP:num-matroids} adapted it from work of Alon, B\'alogh, Morris and Samotij~\cite{Alon+Balogh+Morris+Samotij}, who themselves credit Kleitman and Winston~\cite{Kleitman+Winston} with the original idea.

We describe briefly how the procedure works, following~\cite{Ban+Pen+vdP:num-matroids}, where full details and proofs are given. It takes as input a graph $G=(V,E)$ and a subset $L$ of $V$ and outputs a pair $(S,A)$ of subsets of $V$. We assume that $V$ is given a fixed ordering, purely to break ties in the procedure.
Initially $S$ is empty and $A=V$. As the procedure runs, $S$ increases in size and $A$ decreases. The procedure stops when $|A| \leq \alpha |V|$, where $\alpha$ will be specified later. At each stage a vertex $v$ of $A$ with maximum degree in the induced subgraph $G[A]$ is chosen, with ties broken according to the ordering of $V$. If $v \notin L$ then $v$ is removed from $A$ and the procedure moves onto another stage. If $v\in L$, then $v$ and all of its neighbours in $G[A]$ are removed from $A$ and $v$ is added to $S$.

The following lemma, originally from~\cite{Alon+Balogh+Morris+Samotij} and restated in~\cite{Ban+Pen+vdP:num-matroids}, is crucial.
\begin{lem}
When the procedure terminates, the set $A$ is completely determined by $S$, irrespective of $L$.
\end{lem}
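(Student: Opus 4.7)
The plan is to show that, given only $S$ (together with the graph $G$ and the fixed ordering of $V$), one can reconstruct the entire execution of the procedure, and in particular the final set $A$. I will proceed by induction on the stages of the algorithm, with inductive hypothesis that after $t$ stages the true set $A$ agrees with the set $A'$ produced by a simulation that does not see $L$.

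The simulation maintains $A' \subseteq V$, initially $A' = V$. At each stage it selects the vertex $v \in A'$ of maximum degree in $G[A']$, breaking ties via the fixed ordering on $V$; by the inductive hypothesis this vertex is the same one chosen by the real procedure. The simulation then consults $S$: if $v \in S$ it removes $v$ together with all its neighbours in $G[A']$, and otherwise it removes only $v$. It halts once $|A'| \leq \alpha|V|$. The only thing that has to be checked is that the decision rule based on $S$ matches the real procedure's decision rule based on $L$; equivalently, that at the moment $v$ is selected, $v \in L$ if and only if $v \in S$ in the final output.

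This equivalence is immediate from the structure of the algorithm. A vertex is added to $S$ precisely at the stage at which it is selected and found to lie in $L$; once a vertex leaves $A$ (whether through selection or as a neighbour of a selected $L$-vertex) it never returns to $A$, and no further changes are ever made to its membership in $S$. Hence for the $v$ selected at stage $t+1$, the event $v \in L$ coincides with $v$ being added to $S$ at that stage, which is equivalent to $v$ lying in the final $S$. Thus the simulation's update matches the real update, the inductive hypothesis is preserved, and upon termination $A' = A$.

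The main obstacle, such as it is, lies in articulating the inductive synchronisation cleanly; the underlying content is the observation that $S$ records, in order, exactly those selected vertices which happened to lie in $L$, and that the selection rule uses only the current $A$ and the ordering of $V$. Since no other information about $L$ is ever consulted by the procedure after the decision at each stage, the final $A$ is a function of $S$ alone (given $G$ and the ordering), as required.
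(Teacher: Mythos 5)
Your proof is correct. The paper itself gives no argument for this lemma, simply importing it from Alon, Balogh, Morris and Samotij via Bansal, Pendavingh and van der Pol, and your simulation argument is exactly the standard proof found in those sources: the selection rule depends only on the current $A$ and the fixed ordering, and since each vertex is selected at most once and $S$ gains members only at selection stages, a selected vertex lies in $L$ precisely when it lies in the final $S$, so the run (and hence the terminal $A$, the stopping condition depending only on $|A|$) can be replayed from $S$ alone.
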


The following lemma is from~\cite{Ban+Pen+vdP:num-matroids}.
\begin{lem}
Suppose that $G$ has $N$ vertices, is $d$-regular and the smallest eigenvalue of its adjacency matrix is $-\lambda$. Let $\alpha =  \frac{\lambda}{d+\lambda}$. Then at the end of the procedure described above, we have $|S| \leq \big\lceil \frac{\ln(d+1)}{d+\lambda}N\big\rceil$.
\end{lem}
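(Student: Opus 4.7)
The plan is to combine a Hoffman-type spectral bound on the edge density of induced subgraphs of $G$ with a potential argument that tracks $|A_i|$ through the iterations of the procedure.

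The first step is the following spectral inequality: for every $A \subseteq V(G)$,
\[
2\,e(G[A]) \;\geq\; \frac{d+\lambda}{N}\,|A|^2 - \lambda|A|,
\]
so that the average (and therefore maximum) degree in $G[A]$ is at least $\frac{d+\lambda}{N}(|A|-\alpha N)$. I would prove this in the standard way: letting $M$ denote the adjacency matrix, decompose $\mathbf{1}_A = \frac{|A|}{N}\mathbf{1} + \mathbf{v}$ with $\mathbf{v} \perp \mathbf{1}$, compute $\mathbf{1}_A^\top M \mathbf{1}_A = 2e(G[A])$ using $M\mathbf{1} = d\mathbf{1}$, and bound the residual $\mathbf{v}^\top M \mathbf{v}$ from below using $M + \lambda I \succeq 0$.

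Next I would analyse the procedure step by step. Writing $a_i := |A_i|$, at any iteration with $v_i \in L$ the chosen vertex $v_i$ has maximum degree in $G[A_i]$, so the inequality above yields $a_{i+1} \leq a_i - 1 - \frac{d+\lambda}{N}(a_i - \alpha N)$; at other iterations $a_{i+1} = a_i - 1$. The key trick is to introduce the shifted potential $u_i := a_i - \alpha N + \frac{N}{d+\lambda}$, chosen precisely so that the additive $-1$ is absorbed: a short calculation shows $u_{i+1} \leq u_i \bigl(1 - (d+\lambda)/N\bigr)$ at every $L$-step, while $u_{i+1} \leq u_i$ at every other step. So after $T$ $L$-steps we have $u \leq u_0 \bigl(1-(d+\lambda)/N\bigr)^T$, where $u_0 = (d+1)N/(d+\lambda)$.

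Since the procedure continues while $a_i > \alpha N$, equivalently $u_i > N/(d+\lambda)$, combining these two bounds with the inequality $\ln(1-x) \leq -x$ yields $T \leq \lceil \ln(d+1)N/(d+\lambda) \rceil$, and $|S| = T$ is the desired conclusion. The principal subtlety is choosing the exact shift $N/(d+\lambda)$ in the potential so that the discrete recurrence becomes a clean geometric contraction; once the recurrence is in this form the spectral step is a routine Hoffman-type computation and the final telescoping is immediate. \qed
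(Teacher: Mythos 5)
The paper does not prove this lemma at all: it is imported verbatim from Bansal, Pendavingh and van der Pol \cite{Ban+Pen+vdP:num-matroids}, so there is no internal proof to compare against. Your argument is a correct, self-contained reconstruction of the standard proof in that source: the Hoffman-type bound $2e(G[A]) \geq \frac{d+\lambda}{N}|A|^2 - \lambda|A|$ (via the decomposition of $\mathbf{1}_A$ along $\mathbf{1}$ and $M + \lambda I \succeq 0$) shows each $L$-step deletes at least $1 + \frac{d+\lambda}{N}(|A|-\alpha N)$ vertices, and your shifted potential $u_i = a_i - \alpha N + \frac{N}{d+\lambda}$, with $u_0 = \frac{(d+1)N}{d+\lambda}$, turns this into the geometric contraction that yields the stated bound on $|S|$. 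Two small points you should make explicit to close the argument cleanly. First, the chaining $u \leq u_0\bigl(1-\frac{d+\lambda}{N}\bigr)^T$ needs $1 - \frac{d+\lambda}{N} \geq 0$; this is automatic, since for any $d$-regular graph the smallest eigenvalue is at least $d-N$ (consider the complement, whose nontrivial eigenvalues are $-(1+\mu)$ and whose spectral radius is $N-1-d$), so $\lambda \leq N-d$, and the degenerate case $d+\lambda = N$ forces the procedure to stop after one $L$-step. Second, there is an off-by-one: after the $T$-th $L$-step the stopping condition may already hold, so you must apply $u > \frac{N}{d+\lambda}$ at the moment just \emph{before} the final $L$-step, giving $(T-1)\frac{d+\lambda}{N} < \ln(d+1)$ and hence $T \leq \bigl\lceil \frac{\ln(d+1)}{d+\lambda}N \bigr\rceil$; the ceiling in the statement exists precisely to absorb this. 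With those two remarks your proof is complete and matches the argument the paper relies on by citation.
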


It is not difficult to find the smallest eigenvalue of the adjacency matrix of a connected component of $R_n$.
\begin{lem}
The smallest eigenvalue of the adjacency matrix of a connected component of $R_n$ is $-n/2$ if $n$ is even, and $(1-n)/2$ if $n$ is odd.
\end{lem}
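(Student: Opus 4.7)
The plan is to compute the spectrum of $R_n$ by pushing the spectrum of the hypercube $Q_n$ through a simple polynomial identity, and then to confirm that the minimum eigenvalue is actually attained on each component rather than only on the whole graph. Since $Q_n$ is bipartite, a walk of length two in $Q_n$ returns to the same parity class, and counting such walks yields the matrix identity $A(Q_n)^2 = n I + 2\,A(R_n)$. Combined with the well-known spectrum $\{n - 2k : k = 0, 1, \ldots, n\}$ of $Q_n$, this immediately gives the eigenvalues of $R_n$ as
\[
\frac{(n-2k)^2 - n}{2}, \qquad k = 0, 1, \ldots, n.
\]
Minimising $(n-2k)^2$ over integer $k$ produces $0$ when $n$ is even (at $k = n/2$) and $1$ when $n$ is odd (at $k = (n\pm 1)/2$), yielding the candidate minima $-n/2$ and $(1-n)/2$ respectively.

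The one step that requires a little care is verifying that the minimum eigenvalue genuinely shows up on a single component, since a priori an extremal eigenvector of $R_n$ could live entirely on the other parity class. I would handle this using the standard characters $\chi_S(x) = (-1)^{|S \cap x|}$, which simultaneously diagonalise $A(Q_n)$ and $A(R_n)$. Because the $R_n$-eigenvalue of $\chi_S$ depends only on $(n-2|S|)^2$, the characters $\chi_S$ and $\chi_{[n] \setminus S}$ share the same eigenvalue, and a direct computation shows that $\chi_S + \chi_{[n] \setminus S}$ vanishes on vertices of odd support and equals $2\chi_S$ on vertices of even support. Taking $|S| = n/2$ when $n$ is even, or $|S| = (n-1)/2$ when $n$ is odd, then exhibits a nonzero eigenvector of the even-support component realising the claimed minimum. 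Since the two components of $R_n$ are isomorphic (for instance, via $x \mapsto x \sd \{1\}$, which preserves the distance-$2$ relation in $Q_n$ and swaps the parity classes), the odd-support component has the same smallest eigenvalue.

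The whole argument is essentially routine once the matrix identity is in hand; the only conceptual pitfall is conflating the spectrum of $R_n$ with that of one of its components, which is precisely what the character-sum construction is designed to rule out.
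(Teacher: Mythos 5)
Your proof is correct and follows essentially the same route as the paper: the identity $A(R_n)=\tfrac12\left(A(Q_n)^2-nI\right)$ combined with the hypercube spectrum $\{n-2k\}$ gives the eigenvalues $\tfrac{(n-2k)^2-n}{2}$, minimised exactly as you do. The only difference is in how the component issue is settled: you build an explicit character-sum eigenvector supported on the even-support vertices, whereas the paper disposes of this in one line by noting the two components are isomorphic (so each component's spectrum, and hence its minimum, agrees with that of $R_n$), an observation your argument also uses.
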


\begin{proof}
Denote the adjacency matrix of a graph $G$ by $A(G)$.
Whenever $u$ and $v$ have a common neighbour in $Q_n$, they have exactly 2 common neighbours, so
\[
A(R_n) = \frac{(A(Q_n))^2 - nI_n}{2}
\]
Therefore if $v$ is an eigenvector of $A(Q_n)$ with eigenvalue $\lambda$, then
\[A(R_n) v 	= \frac{1}{2} (A(Q_n))^2 v - \frac{n}{2} I_n v = \frac{\lambda^2 }{2}v - \frac{n}{2} v  = \frac{(\lambda^2-n)
}{2} v,
\]
so $v$ is an eigenvector of $A(R_n)$ with eigenvalue $(\lambda^2-n)/2$. The matrix $A(Q_n)$ is symmetric, so there is a basis $B$ of ${\mathbb R}^{2^n}$ comprising eigenvectors of $A(Q_n)$. We have just shown that all of the vectors in $B$ are also eigenvectors of $A(R_n)$, so every eigenvalue of $A(R_n)$ must be associated with an eigenvector that is also an eigenvector of $A(Q_n)$. Thus $\lambda'$ is an eigenvalue of $R_n$ if and only if $\lambda'=(\lambda^2-n)/2$ where $\lambda$ is an eigenvalue of $A(Q_n)$.

The eigenvalues of $Q_n$ are $-n, -n+2, \ldots, n-2, n$ \cite[p.\ 10]{Brouwer+Haemers:spectra}.
Hence $R_n$ has eigenvalues (listed with multiplicities) $\frac{(-n)^2-n}{2}$, $\frac{(-n+2)^2-n}{2}$, \ldots, $\frac{n^2-n}{2}$.
The result follows as the two components of $R_n$ are isomorphic.\qed
\end{proof}

The second key requirement of the proof is for an even sized infeasible set $X$ to describe concisely which sets of the form $X \sd \{e,f\}$ are infeasible. In other words suppose that $x$ is a vertex of $R_n$ corresponding to an even sized infeasible set $X$, then we wish to describe concisely which neighbours of $x$ in $R_n$ correspond to infeasible sets. Such a description will be used for each vertex of $S$ in the encoding procedure in order to specify the vertices of $S \cup N(S)$ corresponding to infeasible sets.

\begin{lem}
Let $D=(E,\Ff)$ be a delta-matroid and let $X$ be an infeasible set of $D$. Let $\mathcal B$ denote the collection of sets $Y$ with the smallest size possible such that $X \sd Y \in \Ff$. Then $\mathcal B$ forms the collection of bases of a matroid with ground set $E$.
\end{lem}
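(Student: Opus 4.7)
The natural approach is to reduce the statement to the well-known fact that if all feasible sets of a delta-matroid have the same size, they form the bases of a matroid, by passing to a twist.

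First, I would consider the twisted delta-matroid $D*X = (E, \{X \sd F : F \in \Ff\})$. Via the bijection $F \mapsto X \sd F$, the set $\Ff$ of feasible sets of $D$ corresponds exactly to the feasible sets of $D*X$, and the collection $\mathcal B$ of minimum-size $Y$ with $X \sd Y \in \Ff$ becomes precisely the collection of feasible sets of $D*X$ of minimum size. Note that $\mathcal B$ is non-empty since $\Ff$ is non-empty, and since $X$ is infeasible in $D$, the empty set is not feasible in $D*X$, so each member of $\mathcal B$ is non-empty (though this is not strictly needed for the argument).

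The key step is then a lemma: in any delta-matroid $D' = (E, \Ff')$, the feasible sets of minimum size form the bases of a matroid. To prove this, let $B_1, B_2 \in \Ff'$ both be of minimum size and let $e \in B_1 \sd B_2$. By the symmetric exchange axiom, there exists $f \in B_1 \sd B_2$ with $B_1 \sd \{e,f\} \in \Ff'$. I would then check the three possibilities: if $f = e$, or if $e$ and $f$ lie on the same side of $B_1 \sd B_2$, then $|B_1 \sd \{e,f\}|$ is $|B_1| \pm 1$ or $|B_1| - 2$, contradicting minimality. The only surviving case is $e \in B_1 \setminus B_2$ and $f \in B_2 \setminus B_1$ (or the symmetric case), giving $(B_1 \setminus \{e\}) \cup \{f\} \in \Ff'$ of the same minimum size. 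This is precisely the basis exchange axiom, so $\mathcal B$ is the collection of bases of a matroid on $E$.

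Combining these two observations gives the result: $\mathcal B$ is the set of minimum-size feasible sets of the delta-matroid $D*X$, hence forms the bases of a matroid on $E$. I do not anticipate any serious obstacle; the main point is spotting that twisting by $X$ converts ``smallest symmetric difference with a feasible set'' into ``smallest feasible set'', after which the basis-exchange verification is a short case analysis on the symmetric exchange axiom.
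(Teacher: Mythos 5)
Your proposal is correct and follows essentially the same route as the paper: twist by $X$ so that $\mathcal B$ becomes the collection of minimum-size feasible sets of $D*X$; the paper then simply cites Bouchet's theorem that the minimum-cardinality feasible sets of a delta-matroid form the bases of a matroid, which you instead prove directly via the symmetric exchange axiom. One small imprecision: when $e \in B_2 \setminus B_1$, the cases $f=e$ or $f \in B_2 \setminus B_1$ give feasible sets of size $|B_1|+1$ or $|B_1|+2$, which do \emph{not} contradict minimality — but this is harmless, since the basis exchange axiom only requires treating $e \in B_1 \setminus B_2$, and there your case analysis is correct.
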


\begin{proof}
Bouchet~\cite{Bou:Map} proved that the collection of feasible sets of a delta-matroid with minimum cardinality form the bases of a matroid. Now $\mathcal B$ is the collection of feasible sets of the delta-matroid $D*X$ having minimum size and consequently forms the bases of a matroid with ground set $E$.\qed
\end{proof}

Notice that there is a one-to-one correspondence betweens matroids with rank two on ground set $E$ and partitions of $E \cup z$ with at least three blocks, where $z$ is an arbitrary element not contained in $E$. The partition corresponding to a matroid $M$ is formed by taking one block to comprise all the loops of $M$ together with $z$ and each other block to be a parallel class of non-loop elements. In order for the matroid to have rank two, there must be at least two parallel classes of non-loop elements.

Following~\cite{Ban+Pen+vdP:num-matroids}, we introduce the notion of a local cover, which is an object certifying that certain subsets are infeasible, enabling us to satisfy the second requirement of the proof. More precisely, given an even delta-matroid $D=(E,\Ff)$ a \emph{local cover} at $X$, for some subset $X$ of $E$, is a partition of $E \cup z$, where $z$ is an arbitrarily chosen element that is not in $E$. Let $x$ be the vertex of $R_{|E|}$ corresponding to $X$. If $X$ is infeasible with even size, then the local cover at $X$ certifies which of the subsets of $E$ corresponding to vertices in $N(x)$ are infeasible as follows. If the partition has strictly fewer than three blocks, then every subset of the form $X\sd \{a,b\}$ is infeasible.
Otherwise interpret this partition as a matroid $M$ on $E$ with rank two, as described above. A set $X \sd \{a,b\}$ is infeasible if and only if $\{a,b\}$ is not a basis of $M$.
It is clear that for any infeasible set $X$ with even size, one may construct a local cover at $X$ certifying which sets of the form $X \sd \{a,b\}$ are infeasible, in the way we have just described.

\begin{thm}
The number of even delta-matroids $e_n$ on $n$ elements satisfies
\[
\log\log e_n \leq n - \log n + O(\log\log n)
\]
\end{thm}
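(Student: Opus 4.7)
The plan is to adapt the Kleitman--Winston-style encoding of Bansal, Pendavingh and van der Pol to the graph $R_n$, using the local cover machinery developed above to compress the neighbourhood information at each vertex selected by the procedure. Via the twist bijection $D \mapsto D*\{1\}$, it suffices to bound the number $g_n$ of delta-matroids on $[n]$ all of whose feasible sets have even cardinality, since then $e_n = 2g_n$.

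Fix such a $D = ([n], \Ff)$, and let $C$ denote the even-support component of $R_n$. Let $L \subseteq V(C)$ be the set of vertices corresponding to subsets that are \emph{not} feasible in $D$; knowing $L$ determines $\Ff$. The graph $C$ has $N = 2^{n-1}$ vertices, is $d$-regular with $d = \binom{n}{2}$, and by the preceding lemma has smallest eigenvalue $-\lambda$ with $\lambda = \Theta(n)$. Setting $\alpha = \lambda/(d+\lambda) = \Theta(1/n)$ and running the Kleitman--Winston procedure on $(C, L)$ produces a pair $(S, A)$ with $S \subseteq L \subseteq S \cup N(S) \cup A$, with $A$ completely determined by $S$, $|S| \leq s^\ast := \lceil \ln(d+1)\, N/(d+\lambda)\rceil$, and $|A| \leq \alpha N$. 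Substituting $\ln(d+1) = O(\log n)$ and $d+\lambda = \Theta(n^2)$ yields $s^\ast = O(2^n \log n / n^2)$ and $\alpha N = O(2^n/n)$.

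The encoding of $D$ will be the triple $(S, \sigma, T)$, where $\sigma$ assigns to each $s \in S$ a local cover at $s$ (a partition of $[n] \cup \{z\}$, interpreted either as a rank-$2$ matroid whose bases pick out the pairs $\{a,b\}$ with $s \sd \{a,b\} \in \Ff$, when the partition has at least three blocks, or as the certificate that every $s \sd \{a,b\}$ is infeasible, otherwise), and $T = L \cap A$. To reconstruct $L$: the set $S$ lies in $L$ by construction; each $\sigma(s)$ determines $L \cap N(s)$; and $T$ pins down $L \cap A$. Since $L \subseteq S \cup N(S) \cup A$, this recovers $L$, hence $\Ff$, hence $D$. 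The number of possible triples is bounded by
\[
\sum_{k \leq s^\ast} \binom{N}{k} \cdot B_{n+1}^{s^\ast} \cdot 2^{\lceil \alpha N \rceil},
\]
where $B_{n+1}$ is the $(n+1)$st Bell number. The three logarithmic contributions are $O(s^\ast \log(eN/s^\ast)) = O(2^n \log^2 n / n^2)$, $O(s^\ast \cdot n \log n) = O(2^n \log^2 n / n)$ (using $\log B_{n+1} = O(n \log n)$), and $O(2^n/n)$ respectively; the middle term dominates. Hence $\log g_n = O(2^n \log^2 n / n)$, which rearranges to $\log\log e_n \leq n - \log n + 2\log\log n + O(1)$, as required.

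The main obstacle will be the bookkeeping around the local covers, particularly ensuring that the degenerate case of fewer than three blocks is handled consistently and that for every $s \in S$ some local cover actually exists (this follows from the lemma above applied to $D*s$, but the map from delta-matroids to encodings must be shown well-defined). A secondary delicate point is that the spectral gap $\lambda/(d+\lambda) = \Theta(1/n)$ is \emph{only just} strong enough to hold $|A|$ below $O(2^n/n)$: any weaker eigenvalue estimate for $R_n$ would fail to match the target, so the eigenvalue calculation established above is essential rather than incidental.
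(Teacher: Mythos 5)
Your proposal is correct and follows essentially the same route as the paper: twist to reduce to all-even feasible sets, run the Kleitman--Winston/BPvdP procedure on the even component of $R_n$ with the $-\lfloor n/2\rfloor$ eigenvalue bound, encode $L\cap N(S)$ by local covers counted via Bell numbers and $L\cap A$ by listing, and the resulting $\log e_n = O(2^n\log^2 n/n)$ matches the paper's computation. The only quibble is the closing remark that any weaker eigenvalue estimate would fail: since the dominant term is the local-cover term $O(2^n\log^2 n/n)$, any bound $\lambda = O(n\,\mathrm{polylog}\,n)$ would suffice, though the trivial bound $\lambda\le d$ would not.
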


\begin{proof}
We first count the number of even delta-matroids with ground set $[n]$ such that every feasible set has even size, following the encoding procedure of Bansal, Pendavingh, and van der Pol \cite{Ban+Pen+vdP:num-matroids}.
Let $D$ be such a delta-matroid and let $L$ be the set of its infeasible sets having even size.
Recall that each component of $R_n$ is regular with degree $d=\binom n2$ and the adjacency matrix of a component of $R_n$ has smallest eigenvalue $-\lambda$ equal to ${-\big\lfloor\frac n2\big\rfloor}$.

To specify $L$, we first run the encoding procedure from~\cite{Ban+Pen+vdP:num-matroids} described above
with $\alpha =  \frac{\lambda}{d+\lambda}$ to obtain subsets $S$ and $A$ of the vertices of one component of $R_n$ such that $S \subseteq L \subseteq S \cup N(S) \cup A$,
$|S| \leq \big\lceil \frac{\ln(d+1)}{d+\lambda}N\big\rceil$ and $|A| \leq  \frac{\lambda}{d+\lambda}2^{n-1}$. Let $\sigma = \frac{\ln(d+1)}{d+\lambda}$.

We have
\[
\alpha =
\begin{cases}
	\mbox{ } \frac{1}{n} &\text{ if $n$ is even,} \\
	\frac{1}{n+1} &\text{ if $n$ is odd}
\end{cases}
\quad \text{ and } \quad
\sigma =
\begin{cases}
	\frac{2\ln\big(\binom n2+1\big)}{n^2} &\text{ if $n$ is even,} \\
	\frac{2\ln\big(\binom n2 + 1\big)}{n^2 - 1} &\text{ if $n$ is odd.}
\end{cases}
\]

Recall that $A$ is determined by $S$. All members of $L-A$ are contained in $S \cup N(S)$.
Thus in order to specify $L-A$, we require the set $S$ and
a local cover for each subset of $[n]$ corresponding to a member of $S$.
To specify $L \cap A$ we simply list the infeasible sets contained within $A$.

This bounds the number of even delta-matroids with ground set $[n]$ by twice the product of the number of ways of choosing $S$, the number of ways of choosing the corresponding sequence of local covers, one for each element of $S$, and the number of subsets of $A$. Let $B(n)$ denote the $n$th Bell number, that is, the number of partitions of a set of $n$ elements. A crude upper bound for $B(n)$ is given by $B(n) \leq n^n$.
We have
\[  e_n \leq 2 \sum_{i=0}^{\lceil\sigma 2^{n-1}\rceil} \Bigg(\binom{2^{n-1}} i (B(n+1))^i\Bigg) 2^{\frac{1}{n} 2^{n-1} }.\]
Let $\sigma' = \frac{1+\lceil\sigma 2^{n-1}\rceil}{2^{n-1}}$. Hence $\sigma \leq \sigma' \leq \sigma + \frac{1}{2^{n-2}}$.
Applying the inequality $\binom n k \leq ( \frac {ne}{k})^k$ and noting that $\sigma' \leq 1/2$ gives
\begin{align*}
 e_n &\leq \sigma'2^n \binom{2^{n-1}}{\sigma'2^{n-1}} (B(n+1))^{\sigma'2^{n-1}} 2^{\frac 1n 2^{n-1}}\\ &\leq  \sigma' 2^n \Big(\frac e {\sigma'}\Big)^{\sigma' 2^{n-1}}  (n+1)^{(n+1)\sigma'2^{n-1}} 2^{\frac 1n 2^{n-1}}.\end{align*}
Hence
\begin{align*}
\log e_n
&\leq \log \sigma' + n + \sigma' 2^{n-1} (\log e - \log \sigma') + (n+1)\sigma' 2^{n-1} \log (n+1) + \frac {2^{n-1}}n\\
& = 2^{n-1} \Big( \frac{\log \sigma'}{2^{n-1}} + \frac{n}{2^{n-1}} + \sigma' \log e - \sigma' \log \sigma' + (n+1) \sigma' \log(n+1) + \frac 1n\Big).
\end{align*}

We have
\[ \sigma' \leq \frac{2 \log ((n+1)^2)
}{n^2-1} + \frac{1}{2^{n-2}} \leq c_0\frac{\log(n+1)}{(n+1)^2},\]
and similarly
\[ \sigma' \geq c_1\frac{\log(n+1)}{(n+1)^2},\]
for some positive constants $c_0$ and $c_1$. Thus $\sigma' \log \sigma' \geq - c_2 \frac{(\log(n+1))^2}{(n+1)^2}$ for some positive constant $c_2$.
So
\begin{align*}
\log e_n
&\leq 2^{n-1} \Big( \frac{n}{2^{n-1}} + c_0\log e \frac{\log(n+1)}{(n+1)^2} + c_2 \frac{(\log(n+1))^2}{(n+1)^2}\\
 & \phantom{\leq} \ { } +  c_0 \frac{(\log(n+1))^2}{n+1} + \frac 1n\Big)\\
&\leq 2^{n-1} c_3 \frac{ (\log(n+1))^2}{n+1},
\end{align*}
for some positive constant $c_3$, as all the terms in the brackets in the previous line have order at most $\frac{(\log(n+1))^2}{n+1}$.
Finally we obtain
\[ \log \log e_n \leq n -\log n + O(\log \log n).
\] \qed
\end{proof}

\section*{Acknowledgements}
We thank Gordon Royle for independently verifying the numbers of labelled delta-matroids with up to six elements. We also thank the anonymous referees for their careful reading and useful comments.

\bibliographystyle{elsarticle-num}

\end{document}